\setlist[enumerate,1]{wide, labelindent=0pt,label={\upshape(\roman*)}}
\providecommand{\MR}{\relax\ifhmode\unskip\space\fi MR }
\providecommand{\href}[2]{#2}
\numberwithin{equation}{section}
\newcommand{\beq}{\begin{equation}}
\newcommand{\eeq}{\end{equation}}
\newcommand{\bea}{\begin{eqnarray}}
\newcommand{\eea}{\end{eqnarray}}
\newcommand{\beas}{\begin{eqnarray*}}
\newcommand{\eeas}{\end{eqnarray*}}
\newtheorem{theorem}{Theorem}[section]
\newtheorem{assumption}[theorem]{Assumption}
\newtheorem{definition}[theorem]{Definition}
\newtheorem{corollary}[theorem]{Corollary}
\newtheorem{lemma}[theorem]{Lemma}
\newtheorem{remark}[theorem]{Remark}
\newtheorem{example}[theorem]{Example}
\newtheorem{examples}[theorem]{Examples}
\newtheorem{foo}[theorem]{Remarks}
\newcommand{\bM}{\mathbb M}
\newcommand{\Ho}{\mathcal H}
\newcommand{\V}{\mathcal V}
\newcommand{\M}{\mathbb M}
\newcommand{\Lie}{\mathcal{L}}
\newcommand{\Tor}{\mathrm{Tor}}
\newcommand{\hor}{\mathcal{H}}
\newcommand{\ver}{\mathcal{V}}
\title{Topology and  bottom spectrum of transversally negatively curved foliations}
\author{Fabrice Baudoin\footnote{Research partially funded by Villum Fonden through a Villum Investigator grant and by grant
10.46540/4283-00175B from Independent Research Fund Denmark}}
\affil{Department of Mathematics, Aarhus University \\
\texttt{ fbaudoin@math.au.dk}}
\date{August 2025}
\begin{document}

\maketitle

\begin{abstract}
We show that for any  Riemannian foliation with a simply connected and negatively curved leaf space the normal exponential map of a  leaf is a diffeomorphism.  As an application, if the leaves are furthermore minimal submanifolds,  we give a sharp estimate for the bottom of the spectrum of such a  Riemannian manifold. Our proof of the spectral estimate  also yields an estimate for the bottom of the spectrum of the horizontal Laplacian.
\end{abstract}

\tableofcontents

\section{Introduction}

The Cartan-Hadamard is a fundamental theorem in Riemannian geometry and one of the most powerful and elegant results relating curvature to  the global geometry of a space. The theorem states that the universal cover of complete Riemannian manifolds of non-positive sectional curvature is diffeomorphic to a Euclidean space via the exponential map at any point, see \cite[Section 3.87]{GallotHulinLafontaine2004}. Several generalizations of that result have been given in the context of metric spaces following the pioneering work \cite{Gromov1987}  by M. Gromov, see \cite[Chapter II.4]{BridsonHaefliger1999}. In this paper we give an analogue of the  Cartan-Hadamard  theorem for Riemannian foliations:

\begin{theorem}\label{Cartan intro}
Let $\M$ be a complete Riemannian manifold which is foliated by closed leaves. Assume that the leaf space of this foliation is simply connected and that the transverse sectional curvature is bounded above by $-K$ where $K \ge 0$. Then, for any leaf the normal bundle of that leaf is diffeomorphic to $\M$ through the normal exponential map.
\end{theorem} 
Denoting by $\mathfrak{K}$ the sectional curvature of the Levi-Civita connection on $\M$, the condition that the transverse sectional curvature be bounded above by $-K$ is  equivalent  to
\[
\mathfrak{K}(X,Y) \le -K -\frac{3}{4} \left|  [X,Y]_\mathcal{V} \right|^2
\]
 for every $X,Y \in \Gamma (\mathcal{V}^\perp)$ that satisfy $X \perp Y$ and $|X|=|Y|=1$, where $\mathcal{V}$ is the bundle of directions tangent to the leaves.   Our proof of Theorem \ref{Cartan intro} follows the classical approach to the original Cartan–Hadamard theorem but introduces new techniques which are specific to the foliation setting. We first show that, under our conditions, there are no focal points along geodesics perpendicular to the leaves. To achieve this, we extend the theory of  $\mathcal{F}$-Jacobi fields, which arise as variation fields of such geodesics. While the study of these fields originates in the seminal work of E. Heintze and H. Karcher \cite{MR0533065} (see also \cite[Section 1.6]{Gromoll}), a key innovation in our approach is to express these fields as solutions of a simple equation involving a connection $\nabla$ that, unlike the Levi-Civita connection, is adapted to the geometry of the foliation.  Once the absence of conjugate points is proved, we employ covering theory and metric geometry techniques, tracing back to W. Ambrose \cite{Ambrose} to conclude.
 
 In the second part of the paper we give applications of Theorem \ref{Cartan intro} to spectral properties.  In an influential 1970's paper \cite{McKean}, H. P. McKean proved that the bottom of the spectrum of a simply connected, $n$-dimensional manifold with  sectional curvature bounded above by $-K$, $K>0$, has to be more than $\frac{1}{4} (n-1)^2K$. This bound is sharp since it is an equality on the $n$-dimensional hyperbolic space $H_{n,K}$ of constant curvature $-K$. 
McKean's original argument is simple and ingenious: He first observes that the statement is equivalent to the functional inequality
\[
\frac{1}{4} (n-1)^2K \int f^2 \le \int |\nabla f|^2.
\]
Using then the Cartan-Hadamard theorem which allows to globally write the metric in exponential normal coordinates, this functional inequality is seen to be equivalent to a functional inequality in $\mathbb{R}^n$ which is eventually proved using manipulations of index forms. Since this work, the study of the spectrum of negatively curved manifolds has remained a vibrant area of research up to nowadays, see for instance \cite{MR4617797}. In our context we prove the following theorem which is our main second result:

\begin{theorem}\label{main theorem}
Let $\M$ be a complete $n$-dimensional Riemannian manifold which is foliated by closed and minimal $m$-dimensional leaves. Assume that the leaf space of $\M$ is simply connected and that the transverse sectional curvature is bounded above by $-K$ where $K>0$. Then, the bottom of the spectrum of $\M$ is more than $\frac{1}{4} (n-m-1)^2K$.
\end{theorem}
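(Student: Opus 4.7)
The plan is to adapt the Debiard–Gaveau–Mazet probabilistic argument to the foliated setting. Let $\pi:\M\to N$ denote the projection to the leaf space, which by assumption is a simply connected manifold of dimension $d=n-m$. The transverse metric descends to a Riemannian metric on $N$ of sectional curvature $\le -K$, so Cartan–Hadamard furnishes a globally defined smooth distance $\rho$ from a fixed base point of $N$, and the standard Rauch/Bishop comparison gives
\[
\Delta_N\rho \;\ge\; (d-1)\sqrt{K}\coth(\sqrt{K}\rho)
\]
away from the base point (in the barrier sense on the cut locus). Pulling back, set $r:=\rho\circ\pi$ on $\M$; since $r$ is constant along leaves, $\nabla r$ is horizontal and $|\nabla r|_\M=|\nabla \rho|_N=1$.

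The first key step is to translate the preceding inequality into a Laplacian comparison on $\M$. For any basic function $f=g\circ\pi$ one has the identity
\[
\Delta_\M f \;=\; (\Delta_N g)\circ\pi \;-\; H(f),
\]
where $H$ is the mean curvature vector field of the leaves. Here the minimality hypothesis is essential: $H\equiv 0$, so $\Delta_\M r=(\Delta_N\rho)\circ\pi$, which yields
\[
\Delta_\M r \;\ge\; (n-m-1)\sqrt{K}\coth(\sqrt{K}\,r).
\]
The second step is the pathwise comparison of radial processes. Let $B_t$ be a Brownian motion on $\M$ starting from a point $x_0$ with $r(x_0)=0$. By Itô's formula and $|\nabla r|=1$, the process $r(B_t)$ is a semimartingale with unit quadratic variation and drift at least $\tfrac{1}{2}(n-m-1)\sqrt{K}\coth(\sqrt{K}r(B_t))\,dt$. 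Comparing this one–dimensional SDE to the radial process $Y_t$ of Brownian motion on the hyperbolic space $H_{d,K}$ gives $r(B_t)\ge Y_t$ for all $t\ge 0$.

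The last step exploits this comparison for the spectrum. For each $R>0$, let $D_R=r^{-1}([0,R))$; the pathwise domination above implies that the exit time of $B_t$ from $D_R$ stochastically dominates the exit time of $Y_t$ from $[0,R)$, which via the relation $\lambda_0^D=-\lim_{t\to\infty}\tfrac{1}{t}\log\mathbb{P}_{x_0}(\tau_D>t)$ gives $\lambda_0^{D_R}(\M)\ge\lambda_0^{B_R}(H_{d,K})$. Letting $R\to\infty$ through the exhaustion $\{D_R\}$, the left-hand side converges to $\lambda_0(\M)$ and the right-hand side to the bottom of the spectrum of $H_{d,K}$, namely $\tfrac{1}{4}(n-m-1)^2K$, which is the claimed lower bound. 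The main obstacle, and the place where the geometry is really used, is the first step: one must check that the pullback Laplacian formula is valid under the sole hypothesis of minimal (not totally geodesic) leaves and that the transverse metric genuinely descends smoothly to $N$, together with the routine but necessary handling of the cut locus of $\rho$ by barrier/Calabi arguments.
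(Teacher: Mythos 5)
Your overall strategy—bound the Laplacian of a distance function from below, compare the radial semimartingale of Brownian motion to the hyperbolic radial diffusion, and deduce a lower bound on first Dirichlet eigenvalues via exit times—is exactly the one the paper uses (following Debiard–Gaveau–Mazet). However, there is a genuine gap at the place you yourself flag as "the main obstacle," and it cannot be dispatched as a routine check.

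You assume from the outset that the leaf space $N=\M/\mathcal F$ is a simply connected smooth Riemannian manifold, so that $\pi$ is a Riemannian submersion and the classical Cartan--Hadamard theorem can be applied on $N$. Under the theorem's actual hypotheses (a bundle-like Riemannian foliation with closed minimal leaves and simply connected leaf space), $\M/\mathcal F$ is in general only a length space, typically a Riemannian orbifold; it carries a smooth manifold structure only when the leaf holonomy is trivial, which is not assumed. Consequently you cannot invoke Cartan--Hadamard on $N$, the identity $\Delta_\M(g\circ\pi)=(\Delta_N g)\circ\pi - H(g\circ\pi)$ has no a priori meaning, and there is no $N$-cut-locus to handle by Calabi barrier arguments. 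The paper's Section~4.1 exists precisely to circumvent this: it works intrinsically on $\M$ with the distance $r_x=d(\cdot,\mathcal F_x)$ to a leaf, builds the variational theory of horizontal geodesics ($\mathcal F$-Jacobi fields with respect to the adapted connection $\nabla$), shows focal points do not occur under the transverse curvature bound, and then uses Ambrose's covering-map criterion together with simple-connectedness of $\M/\mathcal F$ to prove that the normal exponential map has empty cut locus (Theorem~\ref{empty cut}). That the leaf space turns out to be a manifold ($\cong\mathbb R^{n-m}$) is a \emph{conclusion} of this argument, not an input. The Laplacian comparison for $r_x$ is then obtained via a Hessian/index-form computation for $\nabla$ (Lemmas~\ref{Hessian comparison} and \ref{comparison laplace}), where leaf-minimality enters to make the leafwise trace of $\mathrm{Hess}^D-\mathrm{Hess}^\nabla$ vanish—not through a submersion formula on $N$. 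A second, smaller omission: to apply It\^o's formula for all $t<\zeta$ one must show the Brownian motion almost surely never hits the leaf $\mathcal F_x$; the paper does this by a tube-volume estimate showing $\mathcal F_x$ is polar (Lemma~\ref{radial part}). Your argument takes that for granted.
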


  We note that the constant $\frac{1}{4} (n-m-1)^2K$  is sharp since for the product manifold $\M=H_{d,K} \times \mathbb{R}^m$ the bottom of the spectrum is exactly $\frac{1}{4} (d-1)^2K$. We also note that in the case where the foliation comes from a Riemannian submersion this lower bound can  be derived from the combination of the usual McKean's theorem with the existing results \cite[Theorem 1.1]{Polymerakis} and \cite{CavalcanteManfio2018}.

The paper is organized as follows. In Section 2, we present the necessary background on foliations and collect several preliminary results which will later be useful. In particular, we introduce a connection $\nabla$ which is a central character in our analysis. In Section 3, we develop further the theory of $\mathcal{F}$-Jacobi fields.  A highlight of the section is the following lemma which provides a formula for the Hessian, with respect to the connection $\nabla$, of the distance to a leaf.

\begin{lemma}[See Lemma \ref{Hessian comparison}]
Let $\M$ be a complete $n$-dimensional Riemannian manifold which is foliated by closed  leaves.  Let $\gamma : [0,\rho] \to \M$ be a unit-speed geodesic which is perpendicular to the leaves and $r$ be the distance function to the leaf that contains $\gamma(0)$. Then, at the point $\gamma(\rho)$, one has for every $X \in T_{\gamma(\rho)} \M$ with $X \perp \dot{\gamma}(\rho)$, 
\begin{align}\label{Index hess}
\mathrm{Hess}^{\nabla}(r)(X,X) = \int_0^{\rho}(| \nabla_{\dot\gamma} V_\mathcal{H}|^2 - \langle R(V_\mathcal{H},\dot\gamma) \dot\gamma,V_\mathcal{H} \rangle )dt=\left\langle V_\mathcal{H} (\rho),  \nabla_{\dot\gamma} V_\mathcal{H} (\rho) \right\rangle
\end{align}
where $V$ denotes the $\mathcal{F}$-Jacobi field along $\gamma$ such that $V_\mathcal{H}(0) = 0$ and $V(\rho) = X$, $R$ is the Riemann curvature tensor of $\nabla$, and $V_\mathcal{H}$ denotes the projection of $V$ onto the set of directions which are perpendicular to the leaves.
\end{lemma}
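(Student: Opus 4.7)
The plan is to adapt the classical second-variation proof of the Hessian of a distance function to the foliated setting, with the connection $\nabla$ in place of Levi-Civita and $\F$-Jacobi fields in place of ordinary Jacobi fields. Given $X\in T_{\gamma(\rho)}\M$, I first build a smooth one-parameter family of unit-speed geodesics $\gamma_s:[0,\rho_s]\to\M$ perpendicular to the leaves, whose endpoints $\gamma_s(\rho_s)=c(s)$ trace a curve through $\gamma(\rho)$ with $\dot c(0)=X$, and whose starting points $\gamma_s(0)$ lie on a nearby leaf. After reparameterizing each $\gamma_s$ to the common interval $[0,\rho]$, the variation field $V(t)=\p_s\gamma_s(t)|_{s=0}$ is an $\F$-Jacobi field in the sense of Section 3, with $V(\rho)=X$ and, since $V(0)$ is tangent to the initial leaf through $\gamma(0)$, with $V_\mathcal{H}(0)=0$.

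The first equality in \eqref{Index hess} then comes from a foliated second variation of arclength. Each $\gamma_s$ minimizes the distance from $c(s)$ to the initial leaf, so $r(c(s))$ equals the length of $\gamma_s$ up to the reparameterization correction. Differentiating twice at $s=0$ and using the $\nabla$-version of the second variation formula derived in Section 3 produces $\mathrm{Hess}^\nabla(r)(X,X)$ on the geometric side, and on the analytic side the index form of the component of $V$ perpendicular to $\dot\gamma$. In the foliated picture, minimality of the leaves makes the mean-curvature boundary term at $t=0$ vanish, while the vertical component of $V$ contributes nothing because $\dot\gamma\in\mathcal{H}$ is perpendicular to all vertical directions at every $t$. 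Hence only the horizontal component $V_\mathcal{H}$ survives, yielding $\int_0^\rho(|\nabla_{\dot\gamma} V_\mathcal{H}|^2-\langle R(V_\mathcal{H},\dot\gamma)\dot\gamma,V_\mathcal{H}\rangle)\,dt$.

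The last equality is standard integration by parts along $\gamma$. Using $V_\mathcal{H}(0)=0$ and the horizontal projection of the $\F$-Jacobi equation $\nabla_{\dot\gamma}^2 V_\mathcal{H}+R(V_\mathcal{H},\dot\gamma)\dot\gamma=0$, one has
\begin{align*}
\int_0^\rho|\nabla_{\dot\gamma} V_\mathcal{H}|^2\,dt
&= \langle V_\mathcal{H}(\rho),\nabla_{\dot\gamma} V_\mathcal{H}(\rho)\rangle-\int_0^\rho\langle V_\mathcal{H},\nabla_{\dot\gamma}^2 V_\mathcal{H}\rangle\,dt\\
&= \langle V_\mathcal{H}(\rho),\nabla_{\dot\gamma} V_\mathcal{H}(\rho)\rangle+\int_0^\rho\langle R(V_\mathcal{H},\dot\gamma)\dot\gamma,V_\mathcal{H}\rangle\,dt,
\end{align*}
and rearranging yields the desired boundary form. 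The main obstacle will be the second step: writing down the correct second variation of arclength with respect to the possibly-torsioned connection $\nabla$ and checking that torsion contributions and vertical terms cancel thanks to minimality of the leaves and the transversality $\dot\gamma\in\mathcal H$. Once this identification with the horizontal index form is in place, the remaining manipulations are formal.
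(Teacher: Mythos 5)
Your overall strategy matches the paper's: run a second variation of arclength for a one-parameter family of horizontal geodesics, express the second derivative through the $\nabla$-index form, and integrate by parts to obtain the boundary term $\langle V_\mathcal{H}(\rho),\nabla_{\dot\gamma}V_\mathcal{H}(\rho)\rangle$. The final integration-by-parts step you write out is exactly the paper's.

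However, there is a genuine conceptual error in how you dispose of the boundary term at $t=0$: you invoke \emph{minimality of the leaves}. The paper explicitly observes that formula \eqref{Index hess} is valid in \emph{any} bundle-like Riemannian foliation and does not require minimality; minimality only enters later, when one takes a trace to get the Laplacian comparison (Lemma \ref{comparison laplace}). The correct reason that the $t=0$ boundary term $\langle\nabla_{S}S(0),\dot\gamma(0)\rangle$ vanishes is structural to the adapted connection $\nabla$: the initial variation $S(0,s)=\partial_s\Gamma(0,s)$ is tangent to the leaf $\mathcal{F}_x$, hence vertical, and since $\nabla$ preserves $\mathcal{V}$ the covariant derivative $\nabla_S S$ of a vertical field in a vertical direction is again vertical, so it is orthogonal to the horizontal vector $\dot\gamma(0)$. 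If you replaced $\nabla$ by the Levi--Civita connection $D$, then $D_S S$ would acquire a horizontal component given by the second fundamental form of the leaf, and that term would persist unless the leaf is \emph{totally geodesic} -- minimality (a trace condition) is not even strong enough to kill individual Hessian entries, it only kills their leafwise trace. Similarly, you should note that to annihilate the boundary term at $t=\rho$ one must choose the endpoint curve $c$ to be a $\nabla$-geodesic (i.e. $\nabla_{\dot c}\dot c(0)=0$), as the paper does; a generic $c$ with $\dot c(0)=X$ will leave an extra term $\langle\nabla_{\dot c}\dot c(0),\dot\gamma(\rho)\rangle$. With these two corrections your proof aligns with the paper's.

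One further point worth making precise: the statement that ``the vertical component of $V$ contributes nothing because $\dot\gamma\perp\mathcal{V}$'' is a shorthand that hides the actual mechanism. In the index form one encounters $\langle\nabla_{\dot\gamma}V,\nabla_{\dot\gamma}V+\Tor(V,\dot\gamma)\rangle$; the vertical part of $\nabla_{\dot\gamma}V$ cancels against $\Tor(V,\dot\gamma)$ via the first $\mathcal F$-Jacobi equation $\nabla_{\dot\gamma}V_\mathcal{V}+\Tor(V,\dot\gamma)=0$, and since $\nabla$ preserves the splitting the surviving cross terms are orthogonal, reducing the integrand to $|\nabla_{\dot\gamma}V_\mathcal{H}|^2$. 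Likewise $\langle R(V,\dot\gamma)\dot\gamma,V\rangle$ reduces to the horizontal sectional term because $R(V_\mathcal{V},\dot\gamma)\dot\gamma=0$ (Lemma \ref{hor-ver}) and $R$ preserves $\mathcal H$. Spelling these out removes the reliance on the slightly imprecise ``perpendicularity'' heuristic.
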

It is remarkable that the Hessian formula \eqref{Index hess}  applies in any Riemannian foliation with bundle-like metric and that it does not require the leaves to be minimal. However, let us note that the Hessian is computed with respect to the connection $\nabla$ and while $\mathrm{Hess}^{\nabla}(r)$ vanishes on vector fields fields tangent to the leaves, in our generality the same does not hold when the Hessian is computed for the Levi-Civita connection $D$. Actually, we will see that the  trace of $\mathrm{Hess}^{D}-\mathrm{Hess}^{\nabla}$ along the leaves  exactly yields the mean curvature vector of the leaves. The hypothesis of the minimality of the leaves will therefore come into play when applying \eqref{Index hess} to estimate the Laplacian of $r$.

 Section 4 is the heart of the matter: We  prove Theorem \ref{Cartan intro} and that, under our assumptions, the distance to a leaf is smooth everywhere but on the leaf itself. 
  
In  Section 5, we prove a Laplacian comparison for the distance to a leaf and give the proof of Theorem \ref{main theorem} using the Laplacian comparison theorem, the Rayleigh quotient characterization of the bottom of the spectrum and an integration by parts. The argument also yields an estimate for the bottom of the spectrum of the horizontal Laplacian.

\

\textbf{Acknowledgments.}  I am grateful to the reviewer for a careful reading and suggesting  the  proof of Theorem \ref{poincare inequality}. It greatly simplifies the argument given in a previous versions of this work.

\section{Preliminaries}
\subsection{Riemannian foliations} \label{ssec:foliation}

We consider a smooth, connected, $n+m$ dimensional Riemannian manifold $(\M,g)$ which is equipped with a  Riemannian foliation $\mathcal F$ of  rank $m$. We assume $m \ge 1$ and $n \ge 2$. For $x \in \M$ we denote by $\mathcal F_x$ the $m$-dimensional leaf going through $x$. The sub-bundle $\mathcal{V}$ of the tangent bundle $T\M$ formed by vectors tangent to the leaves is referred  to as the set of \emph{vertical directions}. The sub-bundle $\mathcal{H}$ which is normal to $\mathcal{V}$ is referred to as the set of \emph{horizontal directions} or \emph{transverse directions}.  Any vector $u \in T_x\M$ can therefore be decomposed as 
\[
u=u_\mathcal{H} +u_\mathcal{V}
\]
where $u_\mathcal{H}$ (resp. $u_\mathcal{V}$) denotes the orthogonal projection of $u$ on $\mathcal{H}_x$  (resp. $\mathcal{V}_x$).  The set of smooth sections of $\mathcal{H}$ (resp. $\V$) is denoted by $\Gamma(\mathcal{H})$ (resp. $\Gamma(\mathcal{V})$). For convenience, we will often use the notations $\left\langle \cdot,\cdot \right\rangle=g(\cdot,\cdot)$ and $ | \cdot |^2=g(\cdot,\cdot)$.

The literature on Riemannian foliations is vast and we refer for instance to the monograph  \cite{Gromoll} or to the classical reference  \cite{Tondeur} and its rich bibliography.  We make the following first set of assumptions which will be in force throughout the paper:

\begin{assumption} \label{assump total}
\
\begin{enumerate}
\item The metric $g$ is complete and bundle-like for the foliation $\mathcal{F}$.
\item The leaves of the foliation are closed subsets of $\M$. 
\end{enumerate}
\end{assumption}

We recall, see \cite[Theorem 5.19]{Tondeur}, that the bundle-like property of the metric $g$ can for instance be characterized by the property  that for every $X \in \Gamma (\Ho)$, $Z \in \Gamma (\V)$,
 \begin{equation}\label{bundle-like}
 \Lie_Z g (X,X)=0,
 \end{equation}
where $\Lie$ denotes the Lie derivative. Under that condition, a geodesic which is perpendicular to a leaf at one point is perpendicular to all the leaves it intersects, see  \cite{MR0107279}.

%
%
%

\subsection{Leaf space}

Given the foliation $\mathcal F$ on $\M$, the leaf space of $\mathcal{F}$  is defined as $\M / \mathcal{F} :=\left\{  \mathcal{F}_x , x \in \M \right\}$.  Denoting by $d$  the Riemannian distance associated with the Riemannian metric $g$, one can equip  $\M / \mathcal{F}$ with the  distance
\[
d_{\M/\mathcal F} (\mathcal {F}_x ,\mathcal{F}_y)=\inf \left\{ d(z_1,z_2):  z_1 \in \mathcal{F}_x, z_2 \in \mathcal{F}_y \right\} 
\]
and the projection map $x \to \mathcal{F}_x$ is open and 1-Lipshitz continuous, see \cite{Hermann}.


Let $\rho \in \mathbb{R}$, $\rho>0$. Given a continuous curve $\sigma: [0,\rho] \to  \M / \mathcal{F}$ we define the length $L(\sigma) \in [0,\infty]$ of $\sigma$ by
\[
L(\sigma)=\sup \sum_{i=1}^k d_{\M/\mathcal F} \left( \sigma(t_{i-1}), \sigma(t_i) \right),
\]
where the supremum is taken over all the subdivisions $0\le t_0 \le \cdots \le t_k \le \rho$. The curve $\sigma$ is called rectifiable if its length is finite and is called a unit-speed minimizing geodesic if for every $0 \le s \le t \le \rho$, $d_{\M/\mathcal F} (\sigma(s),\sigma(t))=|t-s|$.

\begin{lemma}\label{geodesic space}
The metric space $\M / \mathcal{F}$ is geodesic, i.e. for every $p_1,p_2 \in \M / \mathcal{F}$, there exists a unit-speed minimizing geodesic $\sigma$ connecting $p_1$ to $p_2$.
\end{lemma}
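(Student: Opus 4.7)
The plan is to reduce the existence of a minimizing geodesic in the leaf space to the existence of a horizontal geodesic in $\M$ realising the distance between the two given leaves, and then to project it down. Throughout I write $L := d_{\M/\mathcal F}(p_1, p_2)$.

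\emph{Realising the infimum.} I would pick a minimizing sequence $(x_n, y_n)\in p_1\times p_2$ with $\ell_n := d(x_n, y_n)\to L$ and minimizing unit-speed geodesics $\gamma_n:[0,\ell_n]\to \M$ from $x_n$ to $y_n$. Since $\M$ is complete (so closed balls are compact) and the leaves are closed, the nearest-point projection of $y_n$ onto $p_1$ exists; after replacing $x_n$ by this projection, the first variation formula shows that $\gamma_n$ meets $p_1$ orthogonally at $x_n$, and by the bundle-like property recalled in Section~\ref{ssec:foliation} the geodesic $\gamma_n$ is then horizontal throughout. Fix once and for all a basepoint $x^*\in p_1$. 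The essential input from the bundle-like hypothesis \cite{MR0107279} is that horizontal geodesics can be transported along leaves: there is a unit-speed horizontal geodesic $\gamma_n^*$ of length $\ell_n$ starting at $x^*$ with $\gamma_n^*(t)$ on the same leaf as $\gamma_n(t)$ for every $t$, and in particular $\gamma_n^*(\ell_n)\in p_2$. The initial velocities of $\gamma_n^*$ lie in the compact unit horizontal sphere at $x^*$, so a subsequence converges to some $v_\infty$. By completeness of $\M$ the geodesic $\gamma^*$ of $\M$ with initial velocity $v_\infty$ is defined on $[0,L+1]$ and $\gamma_n^*\to \gamma^*$ uniformly on compacts; since $\ell_n\to L$ and $p_2$ is closed, $\gamma^*(L)\in p_2$. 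Thus $\gamma^*:[0,L]\to\M$ connects $x^*\in p_1$ to a point of $p_2$ with length exactly $L$.

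\emph{Projection.} Setting $\sigma:=\pi\circ\gamma^*$, the $1$-Lipschitz property of $\pi$ gives $d_{\M/\mathcal F}(\sigma(s),\sigma(t))\le t-s$ for $0\le s\le t\le L$, while the triangle inequality
\[
L\le d_{\M/\mathcal F}(p_1,\sigma(s)) + d_{\M/\mathcal F}(\sigma(s),\sigma(t)) + d_{\M/\mathcal F}(\sigma(t),p_2)\le s + d_{\M/\mathcal F}(\sigma(s),\sigma(t)) + (L-t)
\]
forces the reverse inequality. Hence $\sigma$ is a unit-speed minimizing geodesic from $p_1$ to $p_2$.

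\emph{Main obstacle.} The crux is the leafwise translation of horizontal geodesics in the first step: without it, the basepoints $x_n$ could escape to infinity along non-compact leaves and no subsequential limit of $\gamma_n$ would be available. Everything else is standard compactness combined with the definition of $d_{\M/\mathcal F}$.
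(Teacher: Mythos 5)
Your argument is correct, but it takes a noticeably different route from the paper. The paper's proof is shorter: it fixes $x_1\in p_1$ and directly invokes \cite[Lemma 4.3]{Hermann}, which for a bundle-like foliation with closed leaves gives both the equidistance identity $d_{\M/\mathcal F}(p_1,p_2)=\inf_{y\in p_2} d(x_1,y)$ and the attainment of this infimum at some $x_2\in p_2$; a Hopf--Rinow geodesic from $x_1$ to $x_2$ is then projected exactly as in your final step. You instead reprove the needed content of Hermann's lemma from scratch: you pass from an arbitrary minimizing sequence of pairs $(x_n,y_n)$ to one anchored at a fixed basepoint $x^*\in p_1$ via the leafwise transport of horizontal geodesics guaranteed by the bundle-like property (\cite{MR0107279}), and then use compactness of the unit horizontal sphere at $x^*$ to extract a limiting geodesic. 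Both arguments rely on the same geometric input (equidistance of leaves, closedness of leaves, completeness of $g$); yours is more self-contained but re-derives what the paper simply cites, while the paper's version is cleaner at the cost of leaning on the black box. One small polish: after replacing $x_n$ by the nearest-point projection of $y_n$ onto $p_1$, you should note explicitly that the new distances still tend to $L$ (they are sandwiched between $L$ and the old $\ell_n$), and that the leafwise transport along a path in the connected leaf $p_1$ from $x_n$ to $x^*$ preserves the terminal leaf of the geodesic, which is what you need to conclude $\gamma_n^*(\ell_n)\in p_2$. Neither point is a gap, just a place where the exposition could be tightened.
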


\begin{proof}
Let $x_1 \in \M$ such that $\mathcal{F}_{x_1}=p_1$. Since $p_2$ is closed in $\M$ there exists $x_2 \in p_2$ such that $d_{\M/\mathcal F}(p_1,p_2)=d( x_1,x_2) =\inf_{y \in p_2} d(x_1,y)$, see \cite[Lemma 4.3]{Hermann}. Now, the metric $g$ is complete, from Hopf-Rinow's theorem there exists therefore a smooth unit-speed minimizing geodesic $\gamma: [0,\rho] \to \M$ such that $\gamma(0)=x_1$ and $\gamma(\rho)=x_2$ where $\rho=d(x_1,x_2)$. We define $\sigma(t)=\mathcal{F}_{\gamma(t)}$. We then have
\begin{align*}
L(\sigma)&=\sup \sum_{i=1}^k d_{\M/\mathcal F} \left( \sigma(t_{i-1}), \sigma(t_i) \right)  \le \sup \sum_{i=1}^k d \left( \gamma(t_{i-1}), \gamma(t_i) \right)= \rho.
\end{align*} 
But since $\sigma$ connects $p_1$ to $p_2$, we also have $\rho=d_{\M/\mathcal F}(p_1,p_2) \le L(\sigma)$. Thus, $L(\sigma)=\rho$. It easily follows that $\sigma$ is a unit-speed minimizing geodesic.
\end{proof}

\begin{remark}
 In general, the leaf space $\M / \mathcal{F}$ is not a manifold but a Riemannian orbifold,  see \cite{Reinhart2}. However, if the leaf holonomy is trivial, then there is a smooth Riemannian structure on $\M / \mathcal{F}$ and the projection map $x \to \mathcal{F}_x$ is then a Riemannian submersion, see \cite{Escobales} and  \cite{Hermann}.
\end{remark}

\subsection{The connection $\nabla$}

The Levi-Civita connection $D$ associated with the Riemannian metric $g$ is in general not well adapted to study foliations since the horizontal and vertical bundle might not be $D$-parallel.  There is a more natural connection $\nabla$   that respects the foliation structure, see \cite{baudoin2019comparison} and  \cite{Hladky}. This connection can be described as follows. We first define a $(2,1)$ tensor $ C$ by the formula:
\begin{equation}
\left\langle C_X Y, Z\right\rangle = \frac{1}{2}(\mathcal{L}_{X_\hor}g)(Y_\ver,Z_\ver).
\end{equation}
We note that the leaves of the foliations are totally geodesic if and only if  one identically has $C=0$, as follows for instance from \cite[Theorem 5.23]{Tondeur}.
The following properties hold:
\begin{equation}
C_\ver \ver = 0,\qquad C_{\ver} \hor =0, \qquad C_\hor \hor = 0,\qquad C_{\hor} \ver \subseteq \ver.
\end{equation}

The connection $\nabla$ is then defined in terms of the Levi-Civita one $D$ by
\[
\nabla_X Y =
\begin{cases}
 ( D_X Y)_{\mathcal{H}} , &X,Y \in \Gamma(\mathcal{H}), \\
  [X,Y]_{\mathcal{H}},  &X \in \Gamma(\mathcal{V}),\ Y \in \Gamma(\mathcal{H}), \\
  [X,Y]_{\mathcal{V}}+C_X Y,  &X \in \Gamma(\mathcal{H}),\ Y \in \Gamma(\mathcal{V}), \\
 (D_X Y)_{\mathcal{V}} , &X,Y \in \Gamma(\mathcal{V}).
\end{cases}
\]
A fundamental property of $\nabla$ is that $\Ho$ and $\V$ are parallel. One can check that this connection is metric, i.e.  satisfies $\nabla g =0$ and has a torsion tensor  $\Tor$  which is  given by
\begin{equation}
\Tor(X,Y) = \begin{cases}
- [X,Y]_\mathcal{V} & X,Y \in \Gamma(\hor), \\
C_X Y  &  X \in \Gamma(\hor), Y \in \Gamma(\V), \\
0 & X,Y \in \Gamma(\ver).
\end{cases}
\end{equation}
We notice that $\Tor(X,Y)$ is always vertical, a fact which will be repeatedly used in the sequel.
%
%

For $Z \in \Gamma(\V)$, there is a  unique skew-symmetric endomorphism  $J_Z:T_x\M \to T_x\M$ such that for all vector fields $X$ and $Y$,
\begin{align}\label{Jmap}
\left\langle J_Z (X),Y\right\rangle= \left\langle Z,\Tor (X,Y) \right\rangle.
\end{align}
If $Z\in \Gamma (\Ho)$, from \eqref{Jmap} we set $J_Z=0$. 

Let us denote by $D$ the Levi-Civita connection. From \cite[Theorem 9.24]{Besse} if $X,Y\in \Gamma (\Ho)$, then
\[
(D_X Y)_\V=-(D_Y X)_\V=\frac{1}{2} [X,Y]_\V,
\]
and the relation between the Levi-Civita connection  and the connection $\nabla$ is given by the formula

\begin{equation}\label{Levi-Civita}
\nabla_X Y=D_X Y+\frac{1}{2} \Tor (X,Y)-\frac{1}{2} J_XY-\frac{1}{2} J_Y X, \quad X,Y \in \Gamma (\M).
\end{equation}
Formula \ref{Levi-Civita} can be proved using the characterization of the Levi-Civita connection as the unique metric and torsion free connection.

It is then a trivial consequence of \eqref{Levi-Civita} that the equation of geodesics is
\begin{align}\label{geodesic1}
\nabla_{\dot\gamma} \dot\gamma+ J_{\dot\gamma} \dot\gamma =0.
\end{align}
%

The Riemann curvature tensor of the  connection $\nabla$ is denoted by $R$ and is defined as
\begin{align}\label{riem curv}
R(X,Y)Z=\nabla_X\nabla_Y Z -\nabla_Y \nabla_X Z -\nabla_{[X,Y]} Z, \quad X,Y,Z \in \Gamma (\M).
\end{align}
Let us first  notice that since $\nabla$ preserves  $\mathcal H$ and $\mathcal V$, so does $R(X,Y)$ for every  $X,Y \in \Gamma (\M)$. We have then the following lemma.

\begin{lemma}\label{hor-ver}
For every $X \in \Gamma(\Ho)$ and $Y \in  \Gamma(\V)$, $ R(Y,X)X=0$.
\end{lemma}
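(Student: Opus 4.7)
My plan is to exploit the full tensoriality of $R$: since $R(Y,X)X$ at a point $p$ depends only on $X_p\in\mathcal H_p$ and $Y_p\in\mathcal V_p$, I am free to pick any convenient local extensions. Working in a foliated chart around $p$ — the bundle-like Assumption~\ref{assump total}(i) provides a local Riemannian submersion $\pi$ onto the local leaf space — I would extend $X_p$ to a \emph{basic} horizontal vector field $X$, namely the horizontal lift of some vector field on the quotient, and $Y_p$ to an arbitrary vertical vector field $Y$.

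Two properties of basic horizontal vector fields then drive the computation. First, for any vertical $V$ one has $\pi_\ast[V,X]=[\pi_\ast V,\pi_\ast X]=0$, so $[V,X]\in\Gamma(\mathcal V)$; by the second case in the definition of $\nabla$ in Section~2.3 this gives
\[
\nabla_V X = [V,X]_{\mathcal H} = 0.
\]
Second, by the standard O'Neill identity for a Riemannian submersion, $(D_X X)_{\mathcal H}$ is itself basic horizontal, so $\nabla_X X = (D_X X)_{\mathcal H}$ is again a basic horizontal vector field on the chart.

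Plugging these into
\[
R(Y,X)X = \nabla_Y\nabla_X X - \nabla_X\nabla_Y X - \nabla_{[Y,X]}X
\]
kills each term at $p$: applying the first property to the basic field $\nabla_X X$ gives $\nabla_Y(\nabla_X X)=0$; the middle term vanishes because $\nabla_Y X\equiv 0$ on the chart; and $[Y,X]\in\Gamma(\mathcal V)$, so a further application of the first property yields $\nabla_{[Y,X]}X = [[Y,X],X]_{\mathcal H} = 0$. Hence $R(Y_p,X_p)X_p = 0$, and by tensoriality this holds for all horizontal $X_p$ and vertical $Y_p$ at the arbitrary base point.

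The main point requiring care is that the basic-extension construction and the O'Neill formula must be invoked only \emph{locally}, since the global leaf space may well be singular; however, any foliated chart furnishes a genuine local smooth quotient carrying a Riemannian submersion structure, which is enough. A coordinate-level alternative would take $X=\sum_j c^j e_j$ with constant coefficients in a basic horizontal local frame $\{e_j\}$ and $Y=\sum_k d^k\,\partial/\partial y^k$, reducing every step to a direct bracket computation, and could substitute for the submersion packaging if desired.
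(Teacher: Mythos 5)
Your proof is correct, but it goes by a genuinely different route than the paper's. You localize: invoke the bundle-like structure to get a local Riemannian submersion, extend $X_p$ to a \emph{basic} horizontal field, use $\nabla_V X = [V,X]_{\mathcal H}=0$ for vertical $V$ together with the O'Neill fact that $\nabla_X X=(D_XX)_{\mathcal H}$ is again basic, and then see that every term of $\nabla_Y\nabla_X X - \nabla_X\nabla_Y X - \nabla_{[Y,X]}X$ dies. The paper instead argues globally and purely algebraically: since the torsion of $\nabla$ is always vertical, the first Bianchi identity for connections with torsion gives $R(X,Y)Z+R(Y,Z)X+R(Z,X)Y\in\Gamma(\mathcal V)$; combining this with $\nabla g=0$ (skew-symmetry of $R(\cdot,\cdot)$) and the fact that $\nabla$ preserves $\mathcal H$ and $\mathcal V$, one shows $\langle R(Y,X)X,W\rangle=0$ for all horizontal $W$, so that $R(Y,X)X$ is both horizontal (because $X$ is) and orthogonal to $\mathcal H$, hence zero. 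The paper's argument buys economy — it avoids any reference to local quotients, basic extensions, or O'Neill's formulas, and so isolates exactly which structural features of $\nabla$ (vertical torsion, metricity, parallelism of $\mathcal H\oplus\mathcal V$) are responsible. Your argument buys a concrete geometric picture: in a foliated chart with a basic extension, $\nabla$ along vertical directions is just the Bott flat connection and all three curvature terms vanish term by term; it also makes clear why the computation is really a statement about the local quotient geometry. One point worth flagging in your write-up: you should make explicit that $R$ is $C^\infty$-multilinear for the connection $\nabla$ despite its nonzero torsion — that is standard, but it is the step that licenses replacing arbitrary extensions by basic ones.
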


\begin{proof}
Since the torsion tensor $\Tor$ always yields a vertical field, it follows from the first Bianchi identity for connections with torsion, see for instance \cite[Theorem 1.24]{Besse}, that for every  $X,Y,Z \in \Gamma (\M)$,
\begin{align}\label{Bianchi}
R(X,Y)Z+R(Y,Z)X+R(Z,X)Y \in  \Gamma (\V).
\end{align}
Then, since the connection $\nabla$ is metric, we have for every  $X,Y,Z,W \in \Gamma (\M)$,
\begin{align}\label{metric connection}
\left\langle R(X,Y)Z,W \right\rangle = -\left\langle R(X,Y)W,Z \right\rangle.
\end{align}
For  $X \in \Gamma(\Ho)$, $Y \in  \Gamma(\V)$ and $W \in \Gamma(\Ho)$, we have then  using \eqref{metric connection} and \eqref{Bianchi}
\begin{align*}
\left \langle R(Y,X)X,W \right\rangle&=-\left \langle R(Y,X)W,X \right\rangle =\left \langle R(X,W)Y,X \right\rangle+\left \langle R(W,Y)X,X \right\rangle=0.
\end{align*}
Since $W$ is arbitrary $R(Y,X)X$ is therefore vertical, but it is also horizontal because $X$ is. It follows that $R(Y,X)X=0$.
\end{proof}

\subsection{Properties of the distance function to a leaf }

Let us fix a point $x \in \M$ and consider the distance function 
\[
r_x(y): =d(y,\mathcal F_{x}), \, y \in \M,
\]
where $\mathcal F_{x}$ is the unique leaf going through $x$. According to \cite[Lemma 4.3]{Hermann} it follows  from the bundle-like property of the foliation that if $y,y' \in  \M$ are in the same leaf then $r_x(y)=r_x(y')$. Therefore, if $r_x$ is differentiable at $y$ then for any vector field $Z \in \Gamma (\V)$, we have $Zr_x (y)=0$.

Since $\M$ is complete the Riemannian exponential map $\exp$ is defined on all of the tangent bundle $T\M$. Its restriction to the normal bundle $T^\perp \mathcal F_{x}$ of $\mathcal F_{x}$ is called the normal exponential map. For $\xi=(y,u) \in T^\perp \mathcal F_{x}$, where $y\in \mathcal F_{x}$ and $u\in \mathcal{H}_y$, the curve 
\[
\gamma(t)=\exp (y, t u ), \, t \ge 0
\]
is therefore a geodesic such that $\gamma(0)=y$ and $\dot\gamma(0)=u$.  Consider then the unique smooth  curve $\sigma$, such that $\sigma(0)=y$, $\dot \sigma (0)=u$ and $\nabla_{\dot \sigma} \dot \sigma=0$. Since $\nabla$ preserves $\mathcal{V}$ and $\dot \sigma (0)_\mathcal{V}=0$, one deduces that $\dot {\sigma}(t) \in \mathcal{H}_{\sigma(t)}$ and thus one also has $\nabla_{\dot\sigma} \dot\sigma+ J_{\dot\sigma} \dot\sigma=0$. By uniqueness of solutions for the geodesic equation,  one must have $\sigma=\gamma$. Therefore, $\dot\gamma_\mathcal{V}=0$ which means that $\gamma$ is horizontal, i.e. everywhere tangent to $\mathcal{H}$, and satisfies moreover
\begin{align}\label{geodesic h}
\nabla_{\dot\gamma} \dot\gamma=0.
\end{align}
In particular, we recovered the fact, true in any bundle-like Riemannian foliation, that if $\gamma$ is a geodesic which is perpendicular to one leaf, then it will be perpendicular to any leaf it intersects. If $|u|=1$, we set
\[
\tau (\xi)=\sup \{ t  \ge 0 : r_x(\gamma(t))=t \} \in [0,\infty].
\]
We always have $\tau (\xi) >0$ and if $\tau (\xi)<+\infty$, then $\gamma(\tau (\xi))$ is called the cut point of $\mathcal F_{x}$ along $\gamma$. The set
\[
\mathbf{Cut}(\mathcal F_{x}):=\left\{ \exp (y,\tau (\xi) u)  : \xi=(y,u) \in T^\perp \mathcal F_{x} , |u|=1, \tau (\xi)<+\infty \right\}
\]
is called the cut locus of $\mathcal F_{x}$. Let us denote 
\[
\mathbf{Seg}(\mathcal F_{x}):=\left\{ (y, tu) \in T^\perp \mathcal F_{x} :    \xi=(y,u) \in T^\perp \mathcal F_{x} , |u|=1, 0\le t <\tau (\xi)  \right\}.
\]
We summarize below some known facts about the regularity of $r_x$, see \cite{MR1941909} and references therein:

\begin{enumerate}
\item In the class of continuous functions bounded from below the distance function $r_x$ is  the unique viscosity solution of the following Hamilton-Jacobi problem:
\begin{align*}
\begin{cases}
| \nabla u| =1 & \text{ in } \M \setminus \mathcal F_{x} \\
u=0 & \text{ on } \mathcal F_{x}.
\end{cases}
\end{align*}
\item The distance function $r_x$ is smooth in $ \M \setminus ( \mathcal F_{x} \cup \mathbf{Cut}(\mathcal F_{x}) )$.
\item The normal exponential map is a smooth diffeomorphism between $\mathbf{Seg}(\mathcal F_{x})$ and $ \M \setminus \mathbf{Cut}(\mathcal F_{x}) $.
\item The set $ \mathbf{Cut}(\mathcal F_{x})$ is closed and has measure zero.
\end{enumerate}

\section{$\mathcal F$-Jacobi fields}

In this section we develop the variational theory of geodesics which are transverse to the leaves. Such theory was initiated in \cite{MR0533065}, see also \cite{tubes} or \cite{Gromoll} and the references therein, but for our purpose we need to work with the connection $\nabla$ and therefore rewrite and advance further the theory using that connection. 

\subsection{$\mathcal F$-Jacobi equation and differential of the normal exponential map}

We start with the following basic definitions.

\begin{definition}
Horizontal curves satisfying \eqref{geodesic h} are called horizontal geodesics. 
\end{definition}

\begin{definition}\label{definit Jacobi}
Let $\gamma:[0,\rho]\to \M$ be a horizontal geodesic and let $V$ be a vector field along $\gamma$. If there exists a one-parameter variation  $\Gamma :[0,\rho] \times (-\varepsilon,\varepsilon) \to \M$ such that:
\begin{enumerate}
\item $\Gamma (t,0)=\gamma(t)$, $\partial_s \Gamma(t,s)_{\mid s=0}=V(t)$,
\item For every $s \in (-\varepsilon,\varepsilon)$, $t \to \Gamma (t,s)$ is a horizontal geodesic,
\end{enumerate}
then $V$ is called a $\mathcal F$-Jacobi field of $\gamma$.
\end{definition}

The next Lemma is the Jacobi equation for $\mathcal{F}$-Jacobi fields.

\begin{lemma}\label{Jacobi equation}
Let $\gamma :[0,\rho] \to \M$ be a horizontal geodesic. A vector field $V$ along $\gamma$ is a $\mathcal F$-Jacobi field if and only if
\begin{align}\label{F Jacobi}
\begin{cases}
\nabla_{\dot\gamma} V_\mathcal{V} +\Tor(V,\dot\gamma)=0\\
\nabla_{\dot\gamma}\nabla_{\dot\gamma} V_\mathcal{H} + R(V_\mathcal{H},\dot\gamma)\dot\gamma = 0,
\end{cases}
\end{align}
where $R$ is the Riemann curvature tensor of the connection $\nabla$ as in \eqref{riem curv}.

\begin{proof}
Let $\Gamma :[0,\rho] \times (-\varepsilon,\varepsilon) \to \M$ be a one-parameter variation associated with $V$. In particular $\Gamma(t,0) = \gamma(t)$, $\Gamma(t,s)$ is a horizontal geodesic for all fixed $s \in (-\varepsilon,\varepsilon)$, and $\partial_s \Gamma(t,0) = V(t)$. Let $T = \Gamma_* \partial_t$ and $S = \Gamma_* \partial_s$. We have $T(t,0) = \dot\gamma(t)$ and $S(t,0) = V(t)$. Using the identity $\nabla_T T = 0$ and the definition of $R$, we deduce
\begin{equation}
0 = \nabla_S \nabla_T T  = R(S,T)T + \nabla_T \nabla_S T = R(S,T)T + \nabla_T (\nabla_T S -\Tor(T,S)).
\end{equation}
Moreover, $T$ is horizontal so that
\[
0=(\nabla_S T)_\mathcal{V}=(\nabla_T S -\Tor(T,S))_\mathcal{V}=\nabla_T S_\mathcal{V} -\Tor (T,S).
\]
We deduce that
\[
R(S,T)T + \nabla_T \nabla_T S_\mathcal{H}=0
\]

Computing the above at $s=0$ and using Lemma \ref{hor-ver} yields the Jacobi equation for $V$. Conversely, if a vector field $V$ along $\gamma$ satisfies \eqref{F Jacobi}, then using standard tools as in \cite[Corollary 1.6.1]{Gromoll},  one can construct a variation of $\gamma$ that satisfies the conditions (i) and (ii) of Definition \ref{definit Jacobi}.
\end{proof}
\end{lemma}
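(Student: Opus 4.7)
The plan is to adapt the classical Jacobi field derivation to the torsion-carrying connection $\nabla$, decomposing every identity into horizontal and vertical components. For the forward direction I would start with a variation $\Gamma(t,s)$ of horizontal geodesics realizing $V$, and set $T = \Gamma_* \partial_t$, $S = \Gamma_* \partial_s$. Since $t,s$ are coordinate directions $[T,S]=0$, so the torsion identity gives $\nabla_S T - \nabla_T S = \Tor(S,T)$, while the geodesic equation for each slice reads $\nabla_T T = 0$ throughout the variation.

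The vertical equation comes from differentiating the horizontality of $T$. Because $\nabla$ preserves the splitting $T\M = \mathcal H \oplus \mathcal V$ and $T_{\mathcal V} = 0$, we have $(\nabla_S T)_{\mathcal V} = 0$; substituting $\nabla_S T = \nabla_T S + \Tor(S,T)$ and using that $\Tor$ takes values in $\mathcal V$ collapses this to $\nabla_T S_{\mathcal V} + \Tor(S,T) = 0$, which at $s=0$ is the first line of \eqref{F Jacobi}. The horizontal equation comes from the curvature identity: using $\nabla_T T = 0$ and $[S,T]=0$,
\[
R(S,T)T \;=\; \nabla_S \nabla_T T - \nabla_T \nabla_S T \;=\; -\nabla_T\bigl(\nabla_T S + \Tor(S,T)\bigr).
\]
The term $\nabla_T \Tor(S,T)$ stays vertical since $\Tor(S,T) \in \Gamma(\mathcal V)$ and $\nabla$ preserves the splitting, so it drops when one projects horizontally. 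Because $R$ also respects the splitting and Lemma \ref{hor-ver} gives $R(S_{\mathcal V},T)T = 0$, the horizontal projection reduces to $R(S_{\mathcal H},T)T + \nabla_T\nabla_T S_{\mathcal H} = 0$, which at $s=0$ is the second line of \eqref{F Jacobi}.

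For the converse, \eqref{F Jacobi} is a linear ODE system along $\gamma$, so $V$ is determined by the initial data $(V(0), \nabla_{\dot\gamma} V_{\mathcal H}(0))$. To realize $V$ as a variation field, I would choose a smooth curve $c$ in $\mathcal F_{\gamma(0)}$ with $\dot c(0) = V_{\mathcal V}(0)$ together with a horizontal field $u(s)$ along $c$ having the correct value and covariant derivative at $s=0$, and set $\Gamma(t,s) = \exp_{c(s)}(t\,u(s))$. Since each $u(s)$ is horizontal, the fact recalled in Section 2 guarantees that every slice is a horizontal geodesic, so $\Gamma$ is an admissible variation; its variation field solves \eqref{F Jacobi} with the prescribed initial data and therefore coincides with $V$. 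The main bookkeeping obstacle I anticipate is the step where $\nabla_T \Tor(S,T)$ is discarded from the horizontal projection; this rests squarely on the two structural properties that $\Tor$ has values in $\mathcal V$ and that $\nabla$ preserves the splitting $\mathcal H \oplus \mathcal V$, both of which were established earlier.
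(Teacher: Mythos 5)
Your forward direction is essentially the paper's argument: identical setup with $T=\Gamma_*\partial_t$, $S=\Gamma_*\partial_s$, $\nabla_T T=0$, $[S,T]=0$, the torsion identity to swap $\nabla_S T$ for $\nabla_T S + \Tor(S,T)$, the fact that $\Tor$ is $\mathcal V$-valued and $\nabla$ preserves the splitting to extract the vertical equation from $(\nabla_S T)_\mathcal{V}=0$, and then the curvature identity together with Lemma \ref{hor-ver} for the horizontal equation. The only cosmetic difference is that the paper substitutes the vertical equation back into $\nabla_T S-\Tor(T,S)$ to rewrite it as $\nabla_T S_\mathcal{H}$ before differentiating, whereas you differentiate first and then project horizontally; both are correct.

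For the converse you attempt to fill in the detail the paper delegates to \cite[Corollary 1.6.1]{Gromoll}, and here there is a genuine gap. You take the transverse curve $c(s)=\Gamma(0,s)$ to lie \emph{inside} $\mathcal F_{\gamma(0)}$ with $\dot c(0)=V_\mathcal{V}(0)$. But then the variation field at $t=0$ equals $\dot c(0)\in\mathcal V$, so your construction only realizes Jacobi fields with $V_\mathcal{H}(0)=0$. Definition \ref{definit Jacobi} does not require the slices to start on the same leaf, and the space of solutions of \eqref{F Jacobi} is $(2n+m)$-dimensional precisely because $V_\mathcal{H}(0)$, $V_\mathcal{V}(0)$ and $\nabla_{\dot\gamma}V_\mathcal{H}(0)$ may all be prescribed freely, so an $n$-dimensional slab of Jacobi fields is missed. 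The fix is simple: take $c$ to be an arbitrary curve in $\M$ with $\dot c(0)=V(0)$ (not constrained to the leaf), and choose the field $u(s)\in\mathcal H_{c(s)}$ along $c$ with $u(0)=\dot\gamma(0)$ and with $\nabla$-covariant derivative at $s=0$ tuned so that $\nabla_T S(0,0)=\nabla_S T(0,0)-\Tor(V(0),\dot\gamma(0))$ has the prescribed horizontal part; since each $u(s)$ is horizontal, every slice $t\mapsto\exp_{c(s)}(tu(s))$ is still a horizontal geodesic by the fact recalled after \eqref{geodesic h}, and the rest of your argument goes through.
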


The following corollary is immediate from the Jacobi equation.

\begin{corollary}
Let $\gamma :[0,\rho] \to \M$ be a horizontal geodesic.  The space of $\mathcal{F}$-Jacobi fields along $\gamma$ is $2n+m$-dimensional.
\end{corollary}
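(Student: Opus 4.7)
The strategy is to exploit the fact that Lemma \ref{Jacobi equation} has already identified $\mathcal{F}$-Jacobi fields with solutions of the linear system \eqref{F Jacobi}, so the task reduces to counting those solutions by standard linear ODE theory. The key structural observation is that the system decouples: the second equation in \eqref{F Jacobi} is a closed equation in $V_\mathcal{H}$ alone (since $R(V_\mathcal{H},\dot\gamma)\dot\gamma$ only depends on the horizontal part), while the first equation, once $V_\mathcal{H}$ is known, becomes a closed first-order linear equation in $V_\mathcal{V}$.

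More precisely, I would first solve the horizontal equation $\nabla_{\dot\gamma}\nabla_{\dot\gamma}V_\mathcal{H} + R(V_\mathcal{H},\dot\gamma)\dot\gamma = 0$. Since $\nabla$ preserves $\mathcal{H}$ and $\mathcal{V}$, one can parallel transport an orthonormal frame of $\mathcal{H}_{\gamma(0)}$ along $\gamma$ with respect to $\nabla$ to write this equation as a linear second-order ODE on the $n$-dimensional space $\mathcal{H}_{\gamma(0)}$. By standard ODE theory, the solution space is $2n$-dimensional and parametrized by the initial data $(V_\mathcal{H}(0), (\nabla_{\dot\gamma}V_\mathcal{H})(0)) \in \mathcal{H}_{\gamma(0)}\oplus\mathcal{H}_{\gamma(0)}$.

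For each such horizontal solution $V_\mathcal{H}$, I would then solve the vertical equation $\nabla_{\dot\gamma} V_\mathcal{V} = -\Tor(V,\dot\gamma)$. Using that $\Tor(X,Y)\in\Gamma(\V)$ together with the explicit form of $\Tor$, we have $\Tor(V,\dot\gamma) = \Tor(V_\mathcal{H},\dot\gamma) - C_{\dot\gamma}V_\mathcal{V}$, so the equation reads $\nabla_{\dot\gamma} V_\mathcal{V} + C_{\dot\gamma}V_\mathcal{V} = -\Tor(V_\mathcal{H},\dot\gamma)$, which (after parallel transporting a frame of $\mathcal{V}_{\gamma(0)}$ along $\gamma$ with respect to $\nabla$, using again that $\nabla$ preserves $\V$) is a linear first-order ODE on the $m$-dimensional space $\mathcal{V}_{\gamma(0)}$ with inhomogeneous term determined by the already-chosen $V_\mathcal{H}$. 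Its solution space is thus $m$-dimensional and parametrized by $V_\mathcal{V}(0)\in\mathcal{V}_{\gamma(0)}$.

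Summing the two contributions yields a solution space of dimension $2n+m$. The only point to be careful about is that Lemma \ref{Jacobi equation} supplies the converse direction, namely that every solution of \eqref{F Jacobi} actually arises from a variation by horizontal geodesics and is therefore a bona fide $\mathcal{F}$-Jacobi field in the sense of Definition \ref{definit Jacobi}; given this, the dimension count is complete. I do not expect any serious obstacle here, since the decoupling of the system into horizontal and vertical parts makes the argument essentially a matter of invoking existence and uniqueness for linear ODEs.
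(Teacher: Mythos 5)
Your proposal is correct and matches the paper's (implicit) argument: the paper simply states the corollary is immediate from Lemma \ref{Jacobi equation}, and the intended count is exactly the one you give, namely $2n$ dimensions for the second-order horizontal equation on the rank-$n$ bundle $\mathcal{H}$ plus $m$ dimensions for the first-order vertical equation on the rank-$m$ bundle $\mathcal{V}$. Your extra observation that the system triangularizes, with $\Tor(V,\dot\gamma)=\Tor(V_\mathcal{H},\dot\gamma)-C_{\dot\gamma}V_\mathcal{V}$ making the vertical equation linear in $V_\mathcal{V}$ once $V_\mathcal{H}$ is fixed, is accurate and just spells out why the count is valid.
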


Let $x \in \M$. For $\xi=(y,u)$ in the normal bundle $T^\perp \mathcal F_{x}$, the connection $\nabla$ induces a canonical isomorphism  $T_\xi T^\perp \mathcal F_{x} \to \mathcal{V}_y \oplus  \mathcal{H}_y$. For $v \in T_\xi T^\perp \mathcal F_{x}$, we will denote $v_\mathcal{V}+v_\mathcal{H} $ the image of $v$ by this isomorphism.

\begin{lemma}\label{diff expo}
Let $x \in \M$ and $\xi=(y,u) \in T^\perp \mathcal F_{x}$. We have for every $v \in T_\xi T^\perp \mathcal F_{x}$
\[
d \exp_\xi (v)=V(1),
\]
where $V$ is the unique $\mathcal F$-Jacobi field along the horizontal geodesic $\gamma(t)=\exp (y,tu)$ such that $V(0)=v_\mathcal{V}$ and $\nabla_{\dot \gamma }V(0)=v_\mathcal{H}+\Tor (u,v_\mathcal{V})$.
\end{lemma}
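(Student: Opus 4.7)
The plan is to realize $d\exp_\xi(v)$ as the variational field at $t=1$ of a one-parameter family of horizontal geodesics. First, pick a smooth curve $s\mapsto\xi(s)=(y(s),u(s))$ in $T^\perp\mathcal F_x$ with $\xi(0)=(y,u)$ and $\dot\xi(0)=v$, and define
\[
\Gamma(t,s):=\exp(y(s),tu(s)),\qquad (t,s)\in[0,1]\times(-\varepsilon,\varepsilon).
\]
Since $u(s)\in\mathcal H_{y(s)}$ for each $s$, every curve $t\mapsto\Gamma(t,s)$ is a horizontal geodesic. Hence $V(t):=\partial_s\Gamma(t,s)|_{s=0}$ is an $\mathcal F$-Jacobi field along $\gamma$ by Definition \ref{definit Jacobi} (its equation being supplied by Lemma \ref{Jacobi equation}), and by construction
\[
d\exp_\xi(v)=\partial_s\Gamma(1,s)|_{s=0}=V(1).
\]
It therefore suffices to verify the two initial conditions $V(0)=v_{\mathcal V}$ and $\nabla_{\dot\gamma}V(0)=v_{\mathcal H}+C_uv_{\mathcal V}$; uniqueness of solutions to the first-order ODE system \eqref{F Jacobi} then identifies $V$ with the Jacobi field singled out in the statement.

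For the initial value, since $\Gamma(0,s)=y(s)$ lies in the leaf $\mathcal F_x$, one has $V(0)=\dot y(0)\in\mathcal V_y$. Under the canonical isomorphism $T_\xi T^\perp\mathcal F_x\simeq\mathcal V_y\oplus\mathcal H_y$ induced by $\nabla$, the vector $v$ is represented by the pair $(\dot y(0),\nabla_{\dot y}u|_{s=0})$; the second entry lies in $\mathcal H_y$ because $\nabla$ preserves the horizontal distribution. Thus $V(0)=\dot y(0)=v_{\mathcal V}$. For the covariant initial velocity, set $T=\Gamma_*\partial_t$ and $S=\Gamma_*\partial_s$; then $[T,S]=0$, so the definition of torsion gives $\nabla_T S=\nabla_S T+\Tor(T,S)$. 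Evaluating at $(t,s)=(0,0)$, where $T|_{(0,0)}=u$ and $S|_{(0,0)}=v_{\mathcal V}$, one obtains
\[
\nabla_{\dot\gamma}V(0)=(\nabla_T S)|_{(0,0)}=\nabla_{\dot y}u|_{s=0}+\Tor(u,v_{\mathcal V})=v_{\mathcal H}+C_uv_{\mathcal V},
\]
using the identification of $v_{\mathcal H}$ with $\nabla_{\dot y}u|_{s=0}$ together with the explicit formula for $\Tor$ on $\Gamma(\mathcal H)\times\Gamma(\mathcal V)$.

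The main obstacle I anticipate is making the canonical identification $T_\xi T^\perp\mathcal F_x\simeq\mathcal V_y\oplus\mathcal H_y$ fully explicit and checking that the assignment $v\mapsto(\dot y(0),\nabla_{\dot y}u|_{s=0})$ is independent of the representative curve $\xi(s)$ and is a genuine linear isomorphism. Both facts hinge on $\nabla$ being metric and preserving the splitting $T\M=\mathcal H\oplus\mathcal V$, which guarantees that the covariant derivative of the horizontal section $u(s)$ along the base curve $y(s)$ stays in $\mathcal H$ and depends only on the $1$-jet of $\xi$ at $0$. Once this identification is pinned down, the rest of the argument reduces to the routine variational computation above.
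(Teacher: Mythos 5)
Your argument is correct, but it follows a genuinely different route than the paper's. The paper's proof exploits linearity and treats the two cases $v_{\mathcal H}=0$ and $v_{\mathcal V}=0$ separately: for the purely vertical case it reads off $\nabla_{\dot\gamma}V(0)=C_uv_{\mathcal V}$ from the first line of \eqref{F Jacobi} directly; for the purely horizontal case it invokes the general formula for the differential of the exponential map of the connection $\nabla$ (citing Besse) and then verifies by hand that the resulting $\nabla$-Jacobi field satisfies the $\mathcal F$-Jacobi system \eqref{F Jacobi}, a step requiring an auxiliary argument that $\nabla_{\dot\gamma}V_{\mathcal V}+\Tor(V,\dot\gamma)$ is parallel and vanishes at $t=0$. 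You instead produce a single one-parameter variation $\Gamma(t,s)=\exp(y(s),tu(s))$ by horizontal geodesics covering a general $v$, so that $V=\partial_s\Gamma|_{s=0}$ is an $\mathcal F$-Jacobi field immediately by Definition \ref{definit Jacobi}, and then you extract both initial conditions in one pass from the torsion identity $\nabla_TS=\nabla_ST+\Tor(T,S)$ at $(0,0)$. This is more self-contained (no external citation needed), more transparent, and the identification of $v$ with $(\dot y(0),\nabla_{\dot y}u|_{s=0})$ under the $\nabla$-induced splitting of $T_\xi T^\perp\mathcal F_x$ — which you rightly flag as the one point deserving care — is exactly the standard horizontal/vertical splitting of a vector-bundle tangent space and is well defined because $\nabla$ preserves $\mathcal H$. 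One small imprecision worth tightening: you call \eqref{F Jacobi} a ``first-order ODE system'' when invoking uniqueness, whereas it is first order in $V_{\mathcal V}$ and second order in $V_{\mathcal H}$; the relevant initial data is $(V_{\mathcal V}(0),V_{\mathcal H}(0),\nabla_{\dot\gamma}V_{\mathcal H}(0))$, which your conditions $V(0)=v_{\mathcal V}$, $\nabla_{\dot\gamma}V(0)=v_{\mathcal H}+C_uv_{\mathcal V}$ do determine (projecting onto $\mathcal H$ gives $\nabla_{\dot\gamma}V_{\mathcal H}(0)=v_{\mathcal H}$, and the vertical part $C_uv_{\mathcal V}$ is consistent with the first equation at $t=0$), so uniqueness indeed applies.
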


\begin{proof}
Let $\zeta: (-\varepsilon,\varepsilon) \to T^\perp \mathcal F_{x}$ be a smooth curve such that $\zeta(0)=\xi$ and  $\dot{\zeta}(0)=v$. We define $\Gamma(t,s)=\exp (t\zeta(s))$, $t \in [0,1]$. $\Gamma$ is then a one-parameter variation of the curve $t\to \Gamma(t,0)=\exp( y,tu)$ and for every $s \in (-\varepsilon,\varepsilon)$, $t \to \Gamma (t,s)$ is a horizontal geodesic. By the definition of the $\mathcal{F}$-Jacobi fields, we have
\[
d \exp_\xi (v)=\partial_s \Gamma(1,s)_{\mid s=0}=V(1)
\]
where $V(t)$ is the $\mathcal{F}$-Jacobi field along $\gamma$ which is given by $V(t):=\partial_s \Gamma(t,s)_{\mid s=0}$. We  first notice that
\[
V(0)=\partial_s \Gamma(0,s)_{\mid s=0}=v_\mathcal{V}.
\]
To compute $\nabla_{\dot \gamma }V(0)$ we denote  $T = \Gamma_* \partial_t$ and $S = \Gamma_* \partial_s$. We have then
\[
\nabla_{\dot \gamma }V(0)=(\nabla_T S)_{\mid s=0, t=0}=(\nabla_S T +\Tor (T,S))_{\mid s=0, t=0}=v_\mathcal{H}+\Tor (u,v_\mathcal{V}).
\]
\end{proof}

\begin{definition}\label{focal}
Let $\xi=(y,u) \in T^\perp \mathcal F_{x}$ and  $\gamma(t)=\exp (y,tu)$. If $s >0$ is such that there exists a non zero $\mathcal{F}$-Jacobi field along $\gamma$ satisfying $V_\mathcal{H}(0)=0$ and $V(s)=0$, then $\gamma(s)$ is called a focal point of $\mathcal{F}_x$ along $\gamma$.
\end{definition}

\begin{lemma}
Let $\gamma :[0,\rho] \to \M$ be a unit-speed horizontal geodesic with $\rho < \tau (\gamma(0),\dot \gamma(0))$.  For every  $X \in T_{\gamma(\rho)} \M $ there exists a  unique  $\mathcal{F}$-Jacobi field $V$ along $\gamma$ such that $V_\mathcal{H}(0) = 0$ and $V(\rho) = X$.
\end{lemma}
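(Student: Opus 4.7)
My plan is to identify the evaluation-at-endpoint map on $\mathcal{F}$-Jacobi fields with $V_\mathcal{H}(0)=0$ with the differential of the normal exponential map at $\xi=(\gamma(0),\rho\dot\gamma(0))$, and then exploit the diffeomorphism property of the normal exponential map on $\mathbf{Seg}(\mathcal{F}_x)$ guaranteed by the hypothesis $\rho<\tau(\gamma(0),\dot\gamma(0))$.

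First I would observe that, by the structure of the system \eqref{F Jacobi}, an $\mathcal{F}$-Jacobi field along $\gamma$ is uniquely determined by the triple $(V_\mathcal{H}(0),\nabla_{\dot\gamma}V_\mathcal{H}(0),V_\mathcal{V}(0))$: the second equation is a linear second order ODE for $V_\mathcal{H}$, and, since $\Tor$ takes values in $\mathcal{V}$, the first is a linear first order ODE for $V_\mathcal{V}$ with source depending on $V_\mathcal{H}$. Hence the subspace of $\mathcal{F}$-Jacobi fields along $\gamma$ satisfying $V_\mathcal{H}(0)=0$ is $(n+m)$-dimensional, linearly parameterized by $(V_\mathcal{V}(0),\nabla_{\dot\gamma}V_\mathcal{H}(0))\in \mathcal{V}_{\gamma(0)}\oplus \mathcal{H}_{\gamma(0)}$, and this dimension matches $\dim T_{\gamma(\rho)}\M$.

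Next I would set $y=\gamma(0)$, $u=\rho\dot\gamma(0)$, $\xi=(y,u)\in T^\perp \mathcal{F}_x$, and use the canonical identification $T_\xi T^\perp \mathcal{F}_x\cong \mathcal{V}_y\oplus \mathcal{H}_y$. Rescaling the geodesic $t\mapsto \exp(y,tu)$ on $[0,1]$ to $\gamma$ on $[0,\rho]$ via $V(t)=\tilde V(t/\rho)$, Lemma \ref{diff expo} associates to each $v=(v_\mathcal{V},v_\mathcal{H})$ the unique $\mathcal{F}$-Jacobi field $V$ along $\gamma$ with $V_\mathcal{H}(0)=0$, $V_\mathcal{V}(0)=v_\mathcal{V}$, $\nabla_{\dot\gamma}V_\mathcal{H}(0)=\rho^{-1}v_\mathcal{H}$, and yields $V(\rho)=d\exp_\xi(v)$. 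The torsion correction $C_u v_\mathcal{V}$ appearing in Lemma \ref{diff expo} is vertical (since $C_X Y \in \Gamma(\mathcal{V})$ for $X\in \Gamma(\mathcal{H})$, $Y\in \Gamma(\mathcal{V})$), so it only affects $\nabla_{\dot\gamma}V_\mathcal{V}(0)$ and does not spoil the linear bijection $v\leftrightarrow (V_\mathcal{V}(0),\nabla_{\dot\gamma}V_\mathcal{H}(0))$.

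Combining the two previous steps, the evaluation map $V\mapsto V(\rho)$ from the $(n+m)$-dimensional space of $\mathcal{F}$-Jacobi fields with $V_\mathcal{H}(0)=0$ into $T_{\gamma(\rho)}\M$ coincides, up to the identifications above, with $d\exp_\xi$. The hypothesis $\rho<\tau(y,\dot\gamma(0))$ places $\xi$ in $\mathbf{Seg}(\mathcal{F}_x)$, so by regularity property (3) recalled in the preliminaries the normal exponential map is a local diffeomorphism at $\xi$, hence $d\exp_\xi$ is a linear isomorphism. Consequently $V\mapsto V(\rho)$ is a linear bijection onto $T_{\gamma(\rho)}\M$, and restricting to $X\in \mathcal{H}_{\gamma(\rho)}$ yields the claimed existence and uniqueness. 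The only bookkeeping nuisance is tracking the $C_u v_\mathcal{V}$ correction and the $\rho$-rescaling; beyond that I do not anticipate any substantive obstacle.
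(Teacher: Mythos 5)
Your proposal is correct and follows essentially the same route as the paper: both hinge on the identification provided by Lemma \ref{diff expo} between $\mathcal{F}$-Jacobi fields with vanishing initial horizontal part and the differential of the normal exponential map, combined with the invertibility of $d\exp_\xi$ guaranteed by $\rho<\tau(\gamma(0),\dot\gamma(0))$. The paper phrases this via the map $V\mapsto(V_\mathcal{H}(0),V(\rho))$ on the full $(2n+m)$-dimensional Jacobi space and the absence of focal points, while you work directly with the $(n+m)$-dimensional subspace $\{V_\mathcal{H}(0)=0\}$ and the evaluation map $V\mapsto V(\rho)$; these are trivially equivalent, and your version merely makes the $\rho$-rescaling and the $C_u v_\mathcal{V}$ torsion correction explicit.
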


\begin{proof}
Since $\rho  < \tau (\gamma(0),\dot \gamma(0))$,  the differential map $d\exp$ is invertible at the point $(\gamma(0),\rho \dot \gamma (0))$. It follows from Lemma \ref{diff expo} that $\gamma(\rho)$ is not a focal point. Let now $\mathfrak{L}$ be the $2n+m$ dimensional space of  $\mathcal{F}$-Jacobi fields along $\gamma$. The map $V \to (V_\mathcal{H}(0),V(\rho))$ is injective since  $\gamma(\rho)$ is not a focal point. It is therefore also an isomorphism.
\end{proof}

\subsection{Hessian of the distance to a leaf}

In the next lemma we compute the Hessian of the distance function to a leaf.

\begin{lemma}\label{Hessian comparison}
Let $\gamma :[0,\rho] \to \M$ be a unit-speed horizontal geodesic with $\rho < \tau (\gamma(0),\dot \gamma(0))$. Let $r(\cdot) = d(\cdot, \mathcal{F}_{\gamma(0)})$ and let $\mathrm{Hess}^{\nabla}$ denote the Hessian with respect to the  connection $\nabla$.
\begin{enumerate}
\item We have at $\gamma (\rho)$, $\mathrm{Hess}^{\nabla}(r)(\gamma'(\rho),\gamma'(\rho))=0$.
\item Let $X \in \mathcal{H}_{\gamma(\rho)} $, with $X \perp \dot\gamma(\rho)$. Then, at $\gamma (\rho)$,
\begin{equation}
\mathrm{Hess}^{\nabla}(r)(X,X) = \int_0^{\rho}(| \nabla_{\dot\gamma} V_\mathcal{H}|^2 - \langle R(V_\mathcal{H},\dot\gamma) \dot\gamma,V_\mathcal{H} \rangle )dt=\left\langle V_\mathcal{H} (\rho),  \nabla_{\dot\gamma} V_\mathcal{H} (\rho) \right\rangle ,
\end{equation}
where, in the integrand, $V$ denotes the   $\mathcal{F}$-Jacobi field along $\gamma$ such that $V_\mathcal{H}(0) = 0$ and $V(\rho) = X$.
\item Let $X \in \mathcal{V}_{\gamma(\rho)} $. Then, we have at $\gamma (\rho)$,  $\mathrm{Hess}^{\nabla}(r)(X,X) = 0.$
\end{enumerate}
\end{lemma}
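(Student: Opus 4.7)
The cornerstone is the identity $\mathrm{Hess}^\nabla(r)(X,Y) = \langle \nabla_X \nabla r, Y\rangle$, valid because $\nabla$ is metric. Since $\rho < \tau(\gamma(0),\dot\gamma(0))$, the function $r$ is smooth near $\gamma(\rho)$ by the regularity facts recalled in Section~2.4. Moreover, on the regular set, the gradient $\nabla r$ at any point is the unit horizontal velocity of the unique minimizing horizontal geodesic from the leaf to that point (perpendicularity at the starting point is preserved by the bundle-like property), so $\nabla r$ is everywhere horizontal and along $\gamma$ coincides with $\dot\gamma$.

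Parts (i) and (iii) then drop out immediately. For (i), $\gamma$ is a horizontal geodesic, so $\nabla_{\dot\gamma}\dot\gamma=0$ and $\mathrm{Hess}^\nabla(r)(\dot\gamma(\rho),\dot\gamma(\rho))=0$. For (iii), extend $X$ to a local vertical field $\tilde X$; since $\nabla$ preserves the splitting $T\M=\mathcal{H}\oplus\mathcal{V}$ and $\nabla r$ is horizontal, $\nabla_{\tilde X}\nabla r$ is horizontal at $\gamma(\rho)$ and its pairing with the vertical $X$ vanishes.

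For (ii), the plan is to realize $V$ as the variation field of a one-parameter family $\Gamma(t,s)=\exp(c(s),tU(s))$ of unit-speed horizontal geodesics issuing from $\mathcal{F}_{\gamma(0)}$: take $c$ a smooth curve in the leaf with $\dot c(0)=V(0)$ (possible because $V_\mathcal{H}(0)=0$ forces $V(0)\in\mathcal{V}_{\gamma(0)}$) and $U(s)\in\mathcal{H}_{c(s)}$ a unit horizontal field with $U(0)=\dot\gamma(0)$; Lemma~\ref{diff expo} converts the boundary conditions $(V_\mathcal{H}(0),V(\rho))$ into initial data $(V(0),\nabla_{\dot\gamma}V(0))$ so that $\Gamma$ recovers exactly $V$. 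The hypothesis $X\perp\dot\gamma(\rho)$ is what permits $|U(s)|\equiv 1$ to first order, since by the $\mathcal{F}$-Jacobi equation $\langle V_\mathcal{H},\dot\gamma\rangle$ is affine in $t$. For $s$ small each $\Gamma(\cdot,s)$ stays in $\mathbf{Seg}(\mathcal{F}_{\gamma(0)})$, minimizes the distance to the leaf, and is unit-speed, whence $\nabla r|_{\Gamma(t,s)}=T(t,s):=\partial_t\Gamma$. Setting $S=\partial_s\Gamma$, using $[S,T]=0$ together with $\nabla_S T-\nabla_T S=\Tor(S,T)$, and evaluating at $(t,s)=(\rho,0)$ gives
\[
\nabla_X\nabla r = \nabla_{\dot\gamma}V(\rho) + \Tor(V(\rho),\dot\gamma(\rho)).
\]
Pairing with $X$ annihilates the torsion term ($\Tor$ is vertical, $X$ horizontal) and the vertical part of $\nabla_{\dot\gamma}V(\rho)$ ($\nabla$ respects the splitting), yielding $\mathrm{Hess}^\nabla(r)(X,X)=\langle \nabla_{\dot\gamma}V_\mathcal{H}(\rho),V_\mathcal{H}(\rho)\rangle$, which is the final equality of the claim.

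The integral representation then follows from integrating
\[
\frac{d}{dt}\langle \nabla_{\dot\gamma}V_\mathcal{H},V_\mathcal{H}\rangle = |\nabla_{\dot\gamma}V_\mathcal{H}|^2 + \langle \nabla_{\dot\gamma}\nabla_{\dot\gamma}V_\mathcal{H},V_\mathcal{H}\rangle
\]
on $[0,\rho]$, using the horizontal part of the $\mathcal{F}$-Jacobi equation to replace the second summand by $-\langle R(V_\mathcal{H},\dot\gamma)\dot\gamma,V_\mathcal{H}\rangle$ and the condition $V_\mathcal{H}(0)=0$ to kill the boundary term at $t=0$. The most delicate step in this plan is the construction of $\Gamma$ with all the required properties simultaneously (starting on a single leaf, unit-speed to first order in $s$, and variation field equal to the prescribed $V$); once this is in place, the torsion bookkeeping is routine because $\Tor$ takes values in $\mathcal{V}$.
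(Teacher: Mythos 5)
Your proof is correct, and it takes a genuinely different route from the paper's. The paper realizes $\mathrm{Hess}^\nabla(r)(X,X)$ as the second variation $\frac{d^2}{ds^2}\int_0^\rho|T|\,dt$ of arc-length for a family of minimizing horizontal geodesics $\Gamma(\cdot,s)$ from $\mathcal{F}_{\gamma(0)}$ to a $\nabla$-autoparallel curve $\sigma$ with $\dot\sigma(0)=X$; the index-form computation (with torsion bookkeeping) then produces the integral, and integration by parts yields the boundary term $\langle V_\mathcal{H}(\rho),\nabla_{\dot\gamma}V_\mathcal{H}(\rho)\rangle$. You instead work at first order: starting from $\mathrm{Hess}^\nabla(r)(X,Y)=\langle\nabla_X\nabla r,Y\rangle$ and the identification of $\nabla r$ with the unit tangent field $T$ of the minimizing geodesic fan, you get $\nabla_X\nabla r=\nabla_S T=\nabla_T S+\Tor(S,T)$ directly, pair with horizontal $X$ to kill the torsion and vertical parts, and then obtain the integral representation afterwards by integrating $\frac{d}{dt}\langle\nabla_{\dot\gamma}V_\mathcal{H},V_\mathcal{H}\rangle$. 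In effect you travel the integration-by-parts step in the opposite direction. Your treatment of (i) and (iii) is also more structural than the paper's (horizontality of $\nabla_{\tilde X}\nabla r$ for vertical $\tilde X$, respectively $\nabla_{\dot\gamma}\dot\gamma=0$), whereas the paper derives (i) from $|\nabla r|^2=1$ and (iii) as a corollary of the boundary formula with $V_\mathcal{H}(\rho)=0$. What your approach buys is a shorter, one-order-lower computation with the torsion appearing exactly once; what the paper's approach buys is that the index form $\int_0^\rho(|\nabla_{\dot\gamma}V_\mathcal{H}|^2-\langle R(V_\mathcal{H},\dot\gamma)\dot\gamma,V_\mathcal{H}\rangle)\,dt$ emerges as the primary object, which is the form most immediately useful for the Riccati comparison in Lemma~\ref{comparison laplace}. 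One small stylistic remark: you can drop the ``to first order'' hedge on $|U(s)|\equiv 1$; the affine-ness of $\langle V_\mathcal{H},\dot\gamma\rangle$ together with the vanishing at $t=0$ and $t=\rho$ gives $\langle\nabla_{\dot\gamma}V_\mathcal{H}(0),\dot\gamma(0)\rangle=0$ exactly, so $U(s)$ can be chosen genuinely unit-norm.
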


\begin{proof}

First, the identity $\mathrm{Hess}^{\nabla}(r)(\gamma'(\rho),\gamma'(\rho))=0$ follows from differentiating along $\gamma$ the identity $|\nabla r|^2=1$. Then, let $X \in T_{\gamma(\rho)}\M $, with $X \perp \dot\gamma(\rho)$. Consider a curve $\sigma : (-\varepsilon,\varepsilon) \to \M$ with $\sigma(0) = \gamma(\rho)$, $\dot\sigma(0) = X$ and such that $\nabla_{\dot \sigma} \dot \sigma=0$. For any $s \in (-\varepsilon,\varepsilon)$ let $t\mapsto \Gamma(t,s)$, $t \in [0,\rho]$, be the unique length parametrized horizontal geodesic joining $\mathcal{F}_x$ with $\sigma(s)$. From \cite[Proposition 4.6, (2)]{MR1941909}, provided that $\varepsilon$ is small enough, this family is well defined since $\rho < \tau (\gamma(0),\dot \gamma(0))$. Let now $S= \Gamma_* \partial_s$ and $T = \Gamma_*\partial_t $. As already noted before, we have $T(t,0) = \dot\gamma(t)$ and $S(t,0) = V(t)$. Let us notice that boundary conditions and Jacobi equation imply that $\langle V(t), \dot\gamma(t)\rangle = 0$ for all $t$ and that we have at $s=0$,  $\langle \nabla_TS,T\rangle =0 $. We also notice that by construction $|T(t,0)|=1$. Then, with  everything  computed at $s=0$, we have
\begin{align*}
\mathrm{Hess}^{\nabla} r (X,X) & = \frac{d^2}{ds^2} \int_0^{\rho} | T| dt = \frac{d}{ds} \int_0^{\rho} \frac{1}{|T|} \langle \nabla_ST,T\rangle dt \\
& = \frac{d}{ds} \int_0^{\rho} \frac{1}{|T|} \langle \nabla_TS +\Tor (S,T) ,T\rangle dt \\
& = \frac{d}{ds} \int_0^{\rho} \frac{1}{|T|} \langle \nabla_TS ,T\rangle dt \\
& = \int_0^{\rho} \left( -\langle \nabla_ S T,T\rangle \langle \nabla_TS,T\rangle + \langle \nabla_S \nabla_T S,T\rangle + \langle \nabla_T S, \nabla_S T\rangle\right) dt \\
& = \int_0^{\rho} \langle \nabla_S \nabla_T S,T\rangle + \langle \nabla_T S, \nabla_T S +\Tor (S,T)\rangle dt\\
& = \int_0^{\rho} \left(\langle \nabla_T S, \nabla_T S +\Tor (S,T)\rangle + \left\langle R(S,T)S,T\right\rangle + \langle \nabla_T \nabla_S S,T\rangle \right) dt\\
& = \langle \nabla_{\dot\sigma}\dot\sigma(0), \dot\gamma(\rho)\rangle +  \int_0^{\rho} \left(\langle \nabla_{\dot\gamma} V, \nabla_{\dot\gamma} V+\Tor (V,\dot\gamma)\rangle - \left\langle R(V,\dot\gamma) \dot\gamma,V\right\rangle \right)dt\\
& = \int_0^{\rho} \left(\langle \nabla_{\dot\gamma} V, \nabla_{\dot\gamma} V+\Tor (V,\dot\gamma)\rangle - \left\langle R(V,\dot\gamma) \dot\gamma,V\right\rangle \right)dt \\
&=\int_0^{\rho}(| \nabla_{\dot\gamma} V_\mathcal{H}|^2 - \langle R(V_\mathcal{H},\dot\gamma) \dot\gamma,V_\mathcal{H} \rangle )dt \\
&=\left\langle V_\mathcal{H} (\rho),  \nabla_{\dot\gamma} V_\mathcal{H} (\rho) \right\rangle,
\end{align*}
where in the last equality we integrated by parts, used the Jacobi equation and the fact that $\nabla$ is a metric connection.
If $X$ is horizontal we get the stated result and if $X$ is vertical we get zero because in that case $V_\mathcal{H}=0$.

\end{proof}

\section{Cartan-Hadamard type theorem for foliations}

From now on, in addition to Assumption \ref{assump total}, we make the following:

\begin{assumption}\label{negative curv}

\
\begin{enumerate}
\item The leaf space $\M / \mathcal{F}$ is simply connected.
\item  There exists a constant $K \ge 0$ such that for every $X,Y \in \Gamma (\Ho)$ that satisfy $X \perp Y$ and $|X|=|Y|=1$, we have
\begin{align}\label{sectional}
\left \langle R(X,Y)Y, X \right\rangle \le -K,
\end{align}
where $R$ is the Riemann curvature tensor of the connection $\nabla$ as in \eqref{riem curv}.
\end{enumerate}

\end{assumption}

\begin{remark}
The condition \eqref{sectional} can equivalently be rewritten in terms of the sectional curvature of the Levi-Civita connection. Indeed, in the simple situation where the foliation $\mathcal{F}$ comes from a Riemannian submersion $\pi : (\M, g) \to (\mathbb B, h)$ the upper bound  \eqref{sectional} is simply equivalent to the fact that the sectional curvature of $\mathbb B$ be bounded above by $-K$. Now, since any bundle-like Riemannian foliation is locally given by a Riemannian submersion, one can apply the O'Neills formulas for curvature (see \cite[Page 241]{Besse}) to rewrite \eqref{sectional}  using the sectional curvature of the Levi-Civita connection instead. Denoting by $\mathfrak{K}$ the sectional curvature of the Levi-Civita connection on $\M$, the condition \eqref{sectional} is therefore equivalent  to
\[
\mathfrak{K}(X,Y) \le -K -\frac{3}{4} \left|  [X,Y]_\mathcal{V} \right|^2
\]
 for every $X,Y \in \Gamma (\Ho)$ that satisfy $X \perp Y$ and $|X|=|Y|=1$.
\end{remark}

We now prove the following theorem.

\begin{theorem}\label{empty cut}
For every $x \in \M$,  $\mathbf{Cut}(\mathcal F_{x})=\emptyset$  and the normal exponential map $T^\perp\mathcal{F}_x \to \M$ is a diffeomorphism.
\end{theorem}

We proceed in several steps and split the proof in different lemmas. Throughout the argument the point $x\in \M$ is fixed.

\begin{lemma}\label{no conjugate}
Let $\gamma:[0,\rho]\to \M$ be a  horizontal geodesic started from $x$. The set of focal points of $\mathcal F_x$ along $\gamma$ is empty.
\end{lemma}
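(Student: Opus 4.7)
Assume for contradiction that there exist $s\in(0,\rho]$ and a nonzero $\mathcal{F}$-Jacobi field $V$ along $\gamma$ with $V_\mathcal{H}(0)=0$ and $V(s)=0$. I shall show $V\equiv 0$ on $[0,s]$, contradicting nontriviality. The argument has three steps, mirroring the structure of the decoupled system \eqref{F Jacobi}: first control the longitudinal component of $V_\mathcal{H}$, then its transverse component via convexity and the negative curvature bound, and finally recover $V_\mathcal{V}$ from the resulting first-order ODE.

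\emph{Step 1: $V_\mathcal{H}\perp\dot\gamma$ on $[0,s]$.} Set $a(t):=\langle V_\mathcal{H}(t),\dot\gamma(t)\rangle$. Using $\nabla_{\dot\gamma}\dot\gamma=0$ from \eqref{geodesic h}, the metric property of $\nabla$, and the skew-symmetry $\langle R(X,Y)Z,W\rangle=-\langle R(X,Y)W,Z\rangle$, a direct computation from the second line of \eqref{F Jacobi} gives $a''(t)=-\langle R(V_\mathcal{H},\dot\gamma)\dot\gamma,\dot\gamma\rangle=0$. Since $a(0)=a(s)=0$, we conclude $a\equiv 0$, so $V_\mathcal{H}$ is pointwise orthogonal to $\dot\gamma$ along $[0,s]$.

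\emph{Step 2: $V_\mathcal{H}\equiv 0$ on $[0,s]$.} Let $f(t):=|V_\mathcal{H}(t)|^2$. Compatibility of $\nabla$ with the metric together with the Jacobi equation gives
\begin{equation}
f''(t)=2|\nabla_{\dot\gamma}V_\mathcal{H}|^2-2\langle R(V_\mathcal{H},\dot\gamma)\dot\gamma,V_\mathcal{H}\rangle.
\end{equation}
Applying the transverse curvature bound \eqref{sectional} to the horizontal orthonormal pair $(V_\mathcal{H}/|V_\mathcal{H}|,\dot\gamma)$ (trivially also where $V_\mathcal{H}=0$) yields $\langle R(V_\mathcal{H},\dot\gamma)\dot\gamma,V_\mathcal{H}\rangle\le -K|V_\mathcal{H}|^2$, hence $f''\ge 2K|V_\mathcal{H}|^2\ge 0$. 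Since $f\ge 0$ and $f(0)=f(s)=0$, convexity forces $f\equiv 0$, so $V_\mathcal{H}\equiv 0$ on $[0,s]$.

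\emph{Step 3: $V_\mathcal{V}\equiv 0$ on $[0,s]$.} With $V_\mathcal{H}\equiv 0$ the torsion reduces to $\Tor(V,\dot\gamma)=\Tor(V_\mathcal{V},\dot\gamma)=-C_{\dot\gamma}V_\mathcal{V}$, so the first line of \eqref{F Jacobi} becomes the linear first-order ODE
\begin{equation}
\nabla_{\dot\gamma}V_\mathcal{V}=C_{\dot\gamma}V_\mathcal{V}
\end{equation}
on the vertical subbundle along $\gamma$ (note $C_{\dot\gamma}V_\mathcal{V}\in\mathcal{V}$ by the properties of $C$, so $\nabla$-parallel transport keeps us vertical). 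The terminal condition $V_\mathcal{V}(s)=0$ then forces $V_\mathcal{V}\equiv 0$ by uniqueness of linear ODEs, giving $V\equiv 0$ and the desired contradiction.

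The main obstacle, and the reason for having developed the theory through the connection $\nabla$ rather than the Levi-Civita one, is that Definition \ref{focal} only imposes $V_\mathcal{H}(0)=0$, not $V(0)=0$: the vertical initial value is a priori free, and the transverse curvature hypothesis \eqref{sectional} controls only the horizontal part. One is therefore forced to kill $V_\mathcal{H}$ first (Steps 1--2) so that the coupled system \eqref{F Jacobi} degenerates to the first-order equation of Step 3, whose uniqueness finally handles the vertical component from the terminal datum.
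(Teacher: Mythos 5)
Your argument is correct and follows the paper's own proof in essence: show the horizontal part vanishes by convexity of $|V_\mathcal{H}|^2$ using the transverse curvature bound, then eliminate the vertical part via the resulting first-order linear ODE. Your Step 1 (establishing $V_\mathcal{H}\perp\dot\gamma$) is a useful explicit detail that the paper omits before invoking \eqref{sectional}; alternatively one can note, as the paper implicitly does, that $\langle R(V_\mathcal{H},\dot\gamma)\dot\gamma,V_\mathcal{H}\rangle$ depends only on the component of $V_\mathcal{H}$ orthogonal to $\dot\gamma$ (by metric skew-symmetry and antisymmetry of $R$), so the convexity of $\kappa=|V_\mathcal{H}|^2$ follows directly.
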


\begin{proof}
Let $V$ be a $\mathcal F$-Jacobi field along $\gamma$ such that $V_\mathcal{H}(0)=0$ and $V(s)=0$. Let $\kappa(t)=| V_\mathcal{H} (t) |^2$. One has $\kappa'(t)=2\left\langle \nabla_{\dot \gamma} V_\mathcal{H}(t), V_\mathcal{H}(t) \right\rangle$ and
\begin{align*}
\kappa''(t)&=2\left\langle \nabla_{\dot \gamma}  \nabla_{\dot \gamma} V_\mathcal{H} (t), V_\mathcal{H} (t) \right\rangle +2 |\nabla_{\dot \gamma} V_\mathcal{H}(t)|^2 \\
 &=-2\left\langle R(V_\mathcal{H}(t),\dot\gamma)\dot\gamma, V_\mathcal{H} (t) \right\rangle +2 |\nabla_{\dot \gamma} V_\mathcal{H}(t)|^2 \ge 0.
\end{align*}
Therefore,  the function $\kappa$ is convex, non-negative and vanishes at 0 and $s$. This implies $\kappa=0$ so that $V_\mathcal{H}=0$. We then deduce from the first equation in  \eqref{F Jacobi} that $V$ solves the linear first order differential equation
\[
\nabla_{\dot\gamma} V_\mathcal{V} +\Tor(V_\mathcal{V} ,\dot\gamma)=0
\]
Since $V_\mathcal{V}$ vanishes at $s$, it must be identically zero along $\gamma$. We conclude that $V$ itself identically vanishes  along $\gamma$. 
\end{proof}

\begin{lemma}\label{local homeo}
The map $\Phi: u \to \mathcal{F}_{\exp (x,u)}$ is a local homeomorphism between the horizontal space  $\mathcal{H}_x$ and the leaf space $\M / \mathcal{F}$.
\end{lemma}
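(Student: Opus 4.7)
The plan is to deduce the local homeomorphism property of $\Phi$ from two observations: the normal exponential map $\Psi : T^\perp \mathcal{F}_x \to \M$, $\Psi(y,u) = \exp(y,u)$, is a local diffeomorphism, and under $\Psi$ the foliation $\mathcal{F}$ pulls back to a foliation whose tangent distribution at $(y,u)$ consists precisely of the vectors with $v_\mathcal{H} = 0$ in the decomposition of Lemma~\ref{diff expo}. Continuity of $\Phi$ is immediate from the smoothness of $\exp$ and the $1$-Lipschitz property of the projection $\pi : \M \to \M/\mathcal{F}$.

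To prove that $\Psi$ is a local diffeomorphism I would use Lemma~\ref{diff expo} to write
\[
d\Psi_{(y,u)}(v) = V(1),
\]
where $V$ is the $\mathcal{F}$-Jacobi field along $\gamma(t) = \exp(y,tu)$ with $V(0) = v_\mathcal{V}$ and $\nabla_{\dot\gamma} V(0) = v_\mathcal{H} + C_u v_\mathcal{V}$. If $d\Psi(v) = 0$, then $V(1) = 0$ and $V_\mathcal{H}(0) = 0$, so Lemma~\ref{no conjugate} (whose proof applies equally to horizontal geodesics starting at any point of $\mathcal{F}_x$) forces $V \equiv 0$. Reading off the initial conditions and using that $\nabla$ preserves $\mathcal{H}$ together with $C_u v_\mathcal{V}\in\mathcal{V}$, one gets $v_\mathcal{V} = 0$ and $v_\mathcal{H} = 0$. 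Since $\dim T^\perp\mathcal{F}_x = m+n = \dim\M$, $\Psi$ is a local diffeomorphism.

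The same Jacobi analysis identifies the tangent distribution of the pull-back foliation $\Psi^{-1}(\mathcal{F})$: a vector $v$ is tangent to $\Psi^{-1}(\mathcal{F})$ iff $V_\mathcal{H}(1) = 0$, which combined with $V_\mathcal{H}(0) = 0$ gives, via the convexity argument in the proof of Lemma~\ref{no conjugate}, that $V_\mathcal{H} \equiv 0$. Hence $v_\mathcal{H} = \nabla_{\dot\gamma} V_\mathcal{H}(0) = 0$, and by dimension count the tangent distribution of $\Psi^{-1}(\mathcal{F})$ is exactly $\{v : v_\mathcal{H} = 0\}$. In particular, at $\xi_0 = (x, u_0)$ the fiber $\{x\}\times\mathcal{H}_x$ of $T^\perp\mathcal{F}_x$ over $x$, whose tangent is $\{v : v_\mathcal{V} = 0\}$, is complementary to this distribution; by transversality and the dimension count $\dim(\{x\}\times\mathcal{H}_x) + m - (m+n) = 0$, it meets every nearby leaf of $\Psi^{-1}(\mathcal{F})$ in exactly one point.

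To conclude, I would choose a neighborhood $\tilde U = \Psi(N_0 \times \mathcal{N})$ of $z_0 = \exp(x,u_0)$ on which $\Psi$ is a diffeomorphism and which is small enough that each leaf of $\mathcal{F}$ meets $\tilde U$ in a single plaque. The existence of such $\tilde U$ is a standard consequence of the closedness of the leaves (Assumption~\ref{assump total}) in a bundle-like Riemannian foliation. Then $\exp(x,u_1)$ and $\exp(x,u_2)$ with $u_1,u_2\in\mathcal{N}$ lie on the same leaf of $\mathcal{F}$ iff they lie in the same plaque of $\tilde U$ iff $u_1 = u_2$, giving injectivity of $\Phi|_\mathcal{N}$. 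Openness of $\Phi(\mathcal{N}) = \pi(\tilde U)$ follows from openness of $\pi$, and continuity of the inverse from the identification of $\pi(\tilde U)$ with the transverse slice of the foliated chart. The main obstacle I foresee is the rigorous justification of the ``one plaque per leaf'' property in a neighborhood of $z_0$, which is the crucial place where leaf-closedness enters: without it, a leaf could return through $\tilde U$ in a different plaque and destroy injectivity.
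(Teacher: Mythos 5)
Your proposal follows essentially the same route as the paper: show the normal exponential map is a local diffeomorphism via Lemmas~\ref{diff expo} and~\ref{no conjugate}, identify the tangent distribution of the pull-back foliation $\Psi^{-1}(\mathcal{F})$ with $\{v : v_\mathcal{H}=0\}$, and deduce local injectivity of $\Phi$ from transversality of the fiber $\{x\}\times\mathcal{H}_x$. The paper phrases this by factoring $\Phi=\pi\circ\phi$ through the image submanifold $\mathcal{M}_x=\exp(\{x\}\times\mathcal{H}_x)$ and viewing $\pi:\mathcal{M}_x\to\M/\mathcal{F}$ as a local section of the quotient, but that is a presentational difference only, not a different argument.

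The point you rightly single out as the main obstacle — choosing $\tilde U$ small enough that each leaf meets it in a single plaque — is exactly the step the paper compresses into the unelaborated assertion that, after shrinking $U_{x,u}$, one has $\mathcal{F}_{\exp(x,w)}=\mathcal{F}_{\exp(x,u)}\Rightarrow u=w$. This is where closedness of the leaves (Assumption~\ref{assump total}(ii)) genuinely enters: a closed embedded leaf $L$ and a point $z\in L$ admit a flow-box that $L$ meets in a single plaque, and the uniform statement for all leaves meeting a sufficiently small flow-box is a standard consequence of the bundle-like (Riemannian foliation) structure; Reinhart's tubular-neighborhood theory, or Molino's description of the local model, is the reference one would cite. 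So your caveat is well placed, but it identifies a step that the paper itself treats tersely rather than a flaw in your reasoning; with that point supplied, your proof is correct and equivalent to the paper's.
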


\begin{proof}
 It follows from Lemmas \ref{diff expo} and \ref{no conjugate} that the differential map of  $\exp: T^\perp \mathcal{F}_x \to \M $ is of maximal rank and thus invertible at any point $\xi=(y,u) \in T^\perp \mathcal{F}_x$. Moreover, for every $v \in T_\xi T^\perp \mathcal F_{x}$ such that $v_\mathcal{H}=0$ one has $d \exp_\xi (v) \in \mathcal{V}_{\exp(y,u)}$. Indeed, from the Jacobi equation, the unique $\mathcal F$-Jacobi field along the horizontal geodesic $\gamma(t)=\exp (y,tu)$ such that $V(0)=v_\mathcal{V}$ and $\nabla_{\dot \gamma }V(0)=\Tor( u,v_\V)$ has to be identically vertical along $\gamma$. In particular, we deduce that for every $u \in \mathcal{H}_x$, the map $\exp$ induces a diffeomorphism between a neighborhood $U_{x,u}$ of $(x,u)$ in $T^\perp \mathcal{F}_x$ and a neighborhood $V_{\exp(x,u)}$ of $\exp(x,u)$ in $\M$ such that for any $(x,w) \in U_{(x,u)}$, $\mathcal{F}_{\exp(x,w)} = \mathcal{F}_{\exp(x,u)} \implies u=w$.

Let now $\mathcal{M}_x=\left\{ \exp (x,u): u \in \mathcal{H}_x \right\} \subset \M$. The differential map of  $\phi: u \to \exp (x,u)$ has maximal rank everywhere and thus $\phi$ is an immersion and therefore locally an embedding. We deduce that  $\phi$ induces a local diffeomorphism $\mathcal{H}_x \to \mathcal{M}_x$. Let us now consider the projection map $\pi : \mathcal{M}_x \to \M / \mathcal{F}$ defined as $\pi(y)= \mathcal{F}_y$. It follows from the previous discussion that $\pi$ is a local homeomorphism. One concludes that $\Phi=\pi \circ \phi$ is a local homeomorphism between   $\mathcal{H}_x$ and  $\M / \mathcal{F}$.
\end{proof}

We recall that $(\M / \mathcal{F}, d_{\M / \mathcal{F}})$ is a geodesic metric space, see Lemma \ref{geodesic space}.  The local homeomorphism  $\Phi$ of Lemma \ref{local homeo} allows us to pull-back to $\mathcal{H}_x$ the distance $d_{\M / \mathcal{F}}$. More precisely, for $ u,v \in \mathcal{H}_x$ we define the following distance:
\[
d_{\mathcal{H}_x}(u,v)=\inf \left\{ L( \Phi  \circ \sigma): \sigma:[0,1] \to \mathcal{H}_x, \, \sigma \text{ continuous} ,\sigma(0)=u,\sigma(1)=v   \right\}.
\]

\begin{lemma}\label{complete proof}

The following hold:

\begin{enumerate}
\item $\Phi: (\mathcal{H}_x, d_{\mathcal{H}_x}) \to (\M / \mathcal{F}, d_{\M / \mathcal{F}}) $ is a local isometry;
\item $(\mathcal{H}_x, d_{\mathcal{H}_x})$ is a length space, i.e. for every $ u,v \in \mathcal{H}_x$ 
\[
d_{\mathcal{H}_x}(u,v)=\inf \left\{ L(  \sigma): \sigma:[0,1] \to \mathcal{H}_x, \, \sigma \text{ continuous} ,\sigma(0)=u,\sigma(1)=v   \right\};
\]

\item For every $u \in \mathcal{H}_x$, $d_{\mathcal{H}_x}(0,u)= |u |$;
\item $(\mathcal{H}_x, d_{\mathcal{H}_x})$ is a complete metric space.
\end{enumerate}

\end{lemma}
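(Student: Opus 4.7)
The plan is to establish the four claims in order, with (iii) being the crux and the only place where the transverse negative sectional curvature from Assumption \ref{negative curv} enters in a nontrivial way. Parts (i) and (ii) are essentially formal consequences of Lemma \ref{local homeo} and the geodesic structure of $\M/\mathcal F$ from Lemma \ref{geodesic space}. For (i), I would fix $u_0 \in \mathcal{H}_x$ and take a neighborhood of $u_0$ on which $\Phi$ is a homeomorphism onto a small $W \subset \M/\mathcal F$, chosen so that any two points of $W$ are joined by paths of length arbitrarily close to their $d_{\M/\mathcal F}$-distance that stay inside $W$ (which is possible because $\M/\mathcal F$ is a length space). Lifting such paths via $\Phi^{-1}$ yields $d_{\mathcal{H}_x}(u,v)=d_{\M/\mathcal F}(\Phi(u),\Phi(v))$ for $u,v$ close to $u_0$. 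Part (ii) is then immediate: for a continuous $\sigma:[0,1]\to \mathcal{H}_x$ and a fine enough partition, consecutive $d_{\mathcal{H}_x}$-distances agree with the $d_{\M/\mathcal F}$-distances of the $\Phi$-images, so the $d_{\mathcal{H}_x}$-length of $\sigma$ equals $L(\Phi\circ\sigma)$ and the infimum defining $d_{\mathcal{H}_x}$ is itself a length-infimum.

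The upper bound $d_{\mathcal{H}_x}(0,u)\le |u|$ in (iii) comes from testing with the straight line $\beta(t)=tu$: its image $\Phi\circ\beta$ is the projection in $\M/\mathcal F$ of the horizontal geodesic $\gamma_u(s)=\exp(x,su)$, and since a bundle-like Riemannian foliation is locally given by a Riemannian submersion, short arcs of $\gamma_u$ project isometrically to $\M/\mathcal F$; patching them together gives $L(\Phi\circ\beta)=|u|$.

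For the lower bound, the main part of the argument, take a piecewise smooth $\sigma:[0,1]\to\mathcal{H}_x$ from $0$ to $u$ (the general case reduces to this by density). With $\phi(v)=\exp(x,v)$, Lemma \ref{diff expo} identifies the tangent of $\phi\circ\sigma$ at $t$ as $V_t(1)$, where $V_t$ is the $\mathcal F$-Jacobi field along $\gamma_t(s)=\exp(x,s\sigma(t))$ with $V_t(0)=0$ and $\nabla_{\dot\gamma_t}V_t(0)=\dot\sigma(t)$. Since $d\pi$ kills the vertical part and restricts to an isometry on the horizontal part in the local submersion chart, the speed of $\Phi\circ\sigma$ in $\M/\mathcal F$ equals $|V_{t,\mathcal H}(1)|$ and
\[
L(\Phi\circ\sigma)=\int_0^1 |V_{t,\mathcal H}(1)|\,dt.
\]
The key pointwise estimate to prove is then the Rauch-type inequality $|V_{t,\mathcal H}(1)|\ge |\dot\sigma(t)|$. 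I would split $V_{t,\mathcal H}=a\,\dot\gamma_t/|\sigma(t)|+W$ with $W\perp\dot\gamma_t$: the parallel component obeys $\ddot a=0$ and at $s=1$ recovers exactly the $\sigma(t)$-component of $\dot\sigma(t)$, while for $\mu:=|W|$ the convexity argument used in the proof of Lemma \ref{no conjugate} combined with Cauchy--Schwarz (to absorb the $|\nabla W|^2$ term) produces $\mu''\ge |\sigma(t)|^2 K\mu$ with $\mu(0)=0$ and $\mu'(0)=|\dot\sigma(t)^\perp|$, the component of $\dot\sigma(t)$ orthogonal to $\sigma(t)$. Sturm comparison with the constant-curvature $-K$ model together with the elementary bound $\sinh(x)/x\ge 1$ for $x\ge 0$ yields $|W(1)|\ge|\dot\sigma(t)^\perp|$, and Pythagoras combines this with the parallel part to give $|V_{t,\mathcal H}(1)|\ge |\dot\sigma(t)|$. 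Integrating produces $L(\Phi\circ\sigma)\ge\int_0^1|\dot\sigma(t)|\,dt\ge |u|$, which is (iii).

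Part (iv) then follows cleanly from (i) and (iii). A $d_{\mathcal{H}_x}$-Cauchy sequence $(u_k)$ satisfies $|u_k|=d_{\mathcal{H}_x}(0,u_k)$ bounded by (iii), hence by finite-dimensional compactness admits a Euclidean-convergent subsequence $u_{k_j}\to u_\infty$. Continuity of $\exp(x,\cdot)$ and of the leaf-space projection give $d_{\M/\mathcal F}(\Phi(u_{k_j}),\Phi(u_\infty))\to 0$, which via the local isometry of (i) near $u_\infty$ yields $d_{\mathcal{H}_x}(u_{k_j},u_\infty)\to 0$; the Cauchy property upgrades this to convergence of the full sequence. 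The main obstacle in the whole argument is the Jacobi-field Rauch-type estimate in (iii); all other steps are essentially bookkeeping of the local isometry together with the horizontal-geodesic/Riemannian submersion picture.
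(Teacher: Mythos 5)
Your parts (i), (ii) and (iv) follow essentially the same lines as the paper. The genuine divergence is in (iii), which is the crux. The paper's proof of (iii) is purely metric-geometric: it is a Hopf--Rinow style open--closed argument showing that the straight ray $t\mapsto tw/\eta$ through a nearest point $w$ on a small sphere realizes the distance to $u$, using only the local isometry from (i), the length-space property from (ii), and non-branching of minimizing geodesics. No curvature input enters at this stage; Assumption~\ref{negative curv} is used only once upstream, in Lemma~\ref{no conjugate}, to rule out focal points and hence make $\Phi$ a local homeomorphism. Your approach instead re-invokes the curvature bound directly: you compute $L(\Phi\circ\sigma)=\int_0^1|V_{t,\mathcal H}(1)|\,dt$ via the derivative-of-exponential formula from Lemma~\ref{diff expo} and the local Riemannian-submersion picture, then prove the pointwise Rauch-type estimate $|V_{t,\mathcal H}(1)|\ge|\dot\sigma(t)|$ by splitting into tangential and normal components and running a Sturm comparison (with $\sinh x/x\ge 1$) on $\mu=|W|$. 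That estimate is correct: the tangential part is linear, and for the normal part $\mu''\ge K|\sigma(t)|^2\mu$ with $\mu(0)=0$, $\mu'(0)=|\dot\sigma(t)^\perp|$ gives $\mu(1)\ge|\dot\sigma(t)^\perp|$, and Pythagoras closes the argument. What each approach buys: the paper's version is cleaner in that it only needs the synthetic consequences of ``no focal points'' and applies verbatim to any situation where $\Phi$ is a local isometry onto a length space; yours is more explicit and makes the mechanism of the Cartan--Hadamard-type conclusion transparent, at the price of needing the reduction from arbitrary continuous competitors to piecewise-smooth ones and the identification of $L(\Phi\circ\sigma)$ with the integral of $|V_{t,\mathcal H}(1)|$ (both are true but deserve a sentence of justification, since $\M/\mathcal F$ is in general only an orbifold). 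One small polish: for the upper bound $d_{\mathcal H_x}(0,u)\le|u|$ you only need that $\pi$ is $1$-Lipschitz, so $L(\Phi\circ\beta)\le|u|$ suffices; the claimed equality is not needed there.
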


\begin{proof}

(i). Let $u \in \mathcal{H}_x$. Let  $V$ be a neighborhood of $u$ in $\mathcal{H}_x$ and $W$ be a neighborhood of $\Phi (u)$ in $\M / \mathcal{F}$ such that $\Phi$ is an homeomorphism from $V$ onto $W$. Let $\varepsilon>0$ such that $B_{\M/\mathcal{F}} (\Phi(u), 3\varepsilon) \subset W$ where $B_{\M/\mathcal{F}}$ denotes the open metric ball in $\M/\mathcal{F}$. Let now $p_1,p_2 \in B_{\M/\mathcal{F}} (\Phi(u), \varepsilon)$, $p_1\neq p_2$. Since $(\M / \mathcal{F}, d_{\M / \mathcal{F}})$ is a geodesic metric space, there exists a unit-speed minimizing geodesic $\gamma:[0,\rho] \to \M / \mathcal{F}$ connecting $p_1$ to $p_2$ where $\rho=d_{\M / \mathcal{F}}(p_1,p_2)$. This curve $\gamma$ is included in $W$ because
\[
d_{\M / \mathcal{F}}(\Phi(u), \gamma(t)) \le d_{\M / \mathcal{F}}(\Phi(u), p_1)+d_{\M / \mathcal{F}}(p_1, \gamma(t))< \varepsilon +t \le 3\varepsilon.
\]
We consider then the curve $\sigma(t)=\Phi^{-1}(\gamma(\rho t)) \in V$. It connects $\Phi^{-1}(p_1)$ to $\Phi^{-1}(p_2)$, thus
\[
d_{\mathcal{H}_x} (\Phi^{-1}(p_1) , \Phi^{-1}(p_2) ) \le L (\Phi \circ \sigma)=\rho=d_{\M / \mathcal{F}}(p_1,p_2).
\]
On the other hand, for any  continuous curve  $\sigma:[0,1] \to \mathcal{H}_x$ such that  $\sigma(0)=\Phi^{-1}(p_1),\sigma(1)=\Phi^{-1}(p_2)$, one has $ L (\Phi \circ \sigma) \ge d_{\M / \mathcal{F}}(p_1,p_2)$. Therefore $d_{\mathcal{H}_x} (\Phi^{-1}(p_1) , \Phi^{-1}(p_2) )\ge d_{\M / \mathcal{F}}(p_1,p_2)$. One concludes that $\Phi$ is an isometry between the sets $\Phi^{-1}(B_{\M/\mathcal{F}} (\phi(u), \varepsilon))$ and $B_{\M/\mathcal{F}} (\phi(u), \varepsilon)$.

\

 \noindent (ii). This follows easily from (i), since $\Phi$ being a local isometry implies $L( \Phi  \circ \sigma)=L(\sigma)$.

\

\noindent (iii).  From the proof of Lemma \ref{local homeo} and (i) it follows that for $\eta >0$ small enough,  $u\to\exp (x,u)$ induces a diffeomorphism from $B_{\mathcal{H}_x}(0,2\eta)$ onto its image and that for $u \in B_{\mathcal{H}_x}(0,2\eta)$, $d_{\mathcal{H}_x}(0,u)= |u |$. Let now $u \in \mathcal{H}_x$ with $d_{\mathcal{H}_x}(0,u) \ge 2\eta$. Let us denote $S_{\mathcal{H}_x}(0,\eta)=\left\{ v \in \mathcal{H}_x , |v|=\eta \right\} $ and consider $w \in S_{\mathcal{H}_x}(0,\eta)$ such that
\[
d_{\mathcal{H}_x}(w,u)=\min_{v \in S_{\mathcal{H}_x}(0,\eta)} d_{\mathcal{H}_x}(v,u).
\]
We denote $\gamma(t)=\frac{t}{\eta} w$, $t \ge 0$ and consider the set 
\[
S=\left\{  s \in [\eta, d_{\mathcal{H}_x}(0,u) ]: d_{\mathcal{H}_x}(\gamma(s),u)= d_{\mathcal{H}_x}(0,u)-s  \right\}. 
\]
Proving that $d_{\mathcal{H}_x}(0,u) \in S$ will conclude the argument because $\gamma(d_{\mathcal{H}_x}(0,u))=u$ implies $\frac{d_{\mathcal{H}_x}(0,u)}{\eta} w=u$ and therefore $d_{\mathcal{H}_x}(0,u)=|u|$. First, we note that $\eta \in S$. Indeed, since $(\mathcal{H}_x, d_{\mathcal{H}_x})$ is a length space one has
\begin{align*}
d_{\mathcal{H}_x}(0,u)=\inf_{v \in S_{\mathcal{H}_x}(0,\eta)} ( d_{\mathcal{H}_x}(0,v)+d_{\mathcal{H}_x}(v,u)) =\eta+d_{\mathcal{H}_x}(w,u).
\end{align*}

Then, we remark that $S$ is a closed set, so that $r:=\sup S \in S$. Let us assume, for the sake of contradiction, that $r<d_{\mathcal{H}_x}(0,u)$. Let then $\varepsilon >0$ such that $r+\varepsilon < d_{\mathcal{H}_x}(0,u)$ and $\hat{w} \in \mathcal{H}_x$ such that
\[
d_{\mathcal{H}_x}(\hat{w},u)=\min \left\{  d_{\mathcal{H}_x}(v,u) : d_{\mathcal{H}_x}(v,\gamma(r))=\varepsilon \right\}. 
\]
As before, one has $d_{\mathcal{H}_x}(\gamma(r),u)=\varepsilon+d_{\mathcal{H}_x}(\hat w,u)$. Therefore, since $r \in S$,  one has $\varepsilon+d_{\mathcal{H}_x}(\hat w,u)=d_{\mathcal{H}_x}(0,u)-r$. By the  triangle inequality we  have
\begin{align}\label{triangle proof}
d_{\mathcal{H}_x}(0,\hat w)\ge d_{\mathcal{H}_x}(0,u)-d_{\mathcal{H}_x}(u,\hat w)=r +\varepsilon.
\end{align}
Now, provided that $\varepsilon $ is small enough, there exists  a unique length minimizing unit-speed geodesic $\sigma$ connecting $\gamma(r)$ to $\hat{w}$. We denote by $\hat{\gamma}$ the concatenation of $\gamma_{\big | [0,r]}$ and $\sigma$. The curve $\hat{\gamma}$ has length $r+\varepsilon$ and connects $0$ and $\hat w$, thus from (ii), $d_{\mathcal{H}_x}(0,\hat w)\le r +\varepsilon$. Combining this with \eqref{triangle proof} gives that $d_{\mathcal{H}_x}(0,\hat w)=r+\varepsilon$ so that $\hat{\gamma}$ is a unit-speed minimizing geodesic.  From (i), unit-speed minimizing geodesics in $\mathcal{H}_x$ can not be branching, therefore $\hat{\gamma} =\gamma_{\big | [0,r+\varepsilon]}$ and $r+\varepsilon \in S$, which is a contradiction.

\

\noindent (iv). It is enough to prove that bounded and closed sets are compact. Let $K \subset \mathcal{H}_x$ be bounded and closed set. According to (iii), for $C>0$ large enough we have $K \subset \left\{ u \in \mathcal{H}_x, |u| \le C \right\}$ which is a compact subset of $\mathcal{H}_x$. It follows that $K$ is a closed subset of a compact set and as such, is compact.

\end{proof}

\begin{lemma}\label{global iso}
The map $\Phi: (\mathcal{H}_x, d_{\mathcal{H}_x}) \to (\M / \mathcal{F}, d_{\M / \mathcal{F}}) $ is an isometry.
\end{lemma}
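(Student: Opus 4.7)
The plan is to combine the structural properties of $\Phi$ established in Lemma \ref{complete proof} with the classical covering-space technique going back to Ambrose \cite{Ambrose} and further developed in \cite{Jaramillo}: a local isometry from a complete length space onto a geodesic space is a covering map, and a covering of a simply connected space by a connected space must be a homeomorphism.

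First I would establish a path-lifting property for $\Phi$. Given a rectifiable continuous curve $\sigma:[0,1]\to \M/\mathcal{F}$ with $\sigma(0)=\Phi(u_0)$, set
\[
A = \{ t \in [0,1] : \sigma|_{[0,t]} \text{ admits a continuous lift starting at } u_0 \}.
\]
The set $A$ contains $0$ and is open, by the local-homeomorphism property of $\Phi$ (Lemma \ref{local homeo}). To see that $A$ is closed, let $t_n \nearrow t^*$ with $t_n \in A$ and let $\tilde\sigma(t_n)$ denote the corresponding lifted points. The local isometry property of Lemma \ref{complete proof}(i) yields
\[
d_{\mathcal{H}_x}(\tilde\sigma(t_n), \tilde\sigma(t_m)) \le L(\sigma|_{[t_m, t_n]}),
\]
and since $\sigma$ is rectifiable the right-hand side tends to $0$ as $m,n \to \infty$. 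Hence $(\tilde\sigma(t_n))$ is Cauchy in the complete space $(\mathcal{H}_x, d_{\mathcal{H}_x})$ (Lemma \ref{complete proof}(iv)) and converges to some $u^*$; continuity of $\Phi$ then gives $\Phi(u^*)=\sigma(t^*)$, extending the lift to $[0,t^*]$. Uniqueness of lifts is immediate from the local homeomorphism property.

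With path-lifting in hand, surjectivity of $\Phi$ follows by lifting a unit-speed minimizing geodesic from $\mathcal{F}_x = \Phi(0)$ to any prescribed leaf $\mathcal{F}_y$, whose existence is supplied by Lemma \ref{geodesic space}. Combining path-lifting with the local-homeomorphism property upgrades $\Phi$ to a covering map. Since $\M/\mathcal{F}$ is simply connected (Assumption \ref{negative curv}(i)) and $\mathcal{H}_x$ is connected, the covering must have a single sheet, so $\Phi$ is a homeomorphism. The isometry statement then follows from length considerations: the inequality $d_{\M/\mathcal{F}}(\Phi(u), \Phi(v)) \le d_{\mathcal{H}_x}(u,v)$ is obtained by pushing down length-minimizing sequences of curves in $\mathcal{H}_x$ via the length-preserving local isometry (Lemma \ref{complete proof}(i) and (ii)), and the reverse inequality by lifting a unit-speed minimizing geodesic in $\M/\mathcal{F}$ from $\Phi(u)$ to $\Phi(v)$ to a same-length curve in $\mathcal{H}_x$ joining $u$ and $v$.

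The main obstacle is the closedness step in the path-lifting argument, which critically uses all three structural properties of $(\mathcal{H}_x, d_{\mathcal{H}_x})$ established in Lemma \ref{complete proof}: the local isometry onto $\M/\mathcal{F}$, the length-space structure, and metric completeness. Simple connectedness of the target enters only at the very end, to eliminate non-trivial covers and collapse the covering map to a bijection.
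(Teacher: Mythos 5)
Your proposal follows the same route as the paper: establish that $\Phi$ is a covering map via Ambrose-style arguments, then invoke simple connectedness of $\M/\mathcal{F}$ to collapse the cover to a global isometry. You give more detail on the path-lifting mechanics (open/closed argument with completeness), which is a fine expansion of what the paper compresses into a citation of Ambrose and Jaramillo.

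One step deserves a closer look, though. You write that ``combining path-lifting with the local-homeomorphism property upgrades $\Phi$ to a covering map,'' but this implication is not automatic for local homeomorphisms between general metric spaces: to construct evenly covered neighborhoods one needs to define, for each $u\in\Phi^{-1}(p)$, the sheet $U_u$ as the set of endpoints of lifts (starting at $u$) of paths from $p$ to nearby points $q$, and to show that $U_u$ is open and that the $U_u$ are disjoint one uses that small balls in the target admit unique minimizing geodesics depending continuously on the endpoint. This is exactly the paper's third hypothesis for the Ambrose-type theorem, and it is supplied here by Lemma \ref{local homeo}, which makes $\M/\mathcal{F}$ locally a Riemannian manifold. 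Flagging this condition explicitly would make your appeal to the covering theorem airtight; as written, a reader could wonder why a mere semicovering could not arise.

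Your final step (homeomorphism plus local isometry plus length-space structure implies global isometry) is correct and matches the paper's conclusion.
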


\begin{proof}
From  Lemmas \ref{local homeo} and \ref{complete proof} the following properties are satisfied:
\begin{enumerate}
\item $(\mathcal{H}_x, d_{\mathcal{H}_x})$ is complete;
\item $\Phi: (\mathcal{H}_x, d_{\mathcal{H}_x}) \to (\M / \mathcal{F}, d_{\M / \mathcal{F}}) $ is a local isometry;
\item For every $p \in \M / \mathcal{F}$, there exists $\varepsilon >0$ such that for every $q \in B_{\M / \mathcal{F}} (p,\varepsilon)$ there exists a unique unit-speed minimizing geodesic $\sigma_q:[0,\rho] \to B_{\M / \mathcal{F}} (p,\varepsilon)$ joining $p$ to $q$, and $\sigma_q$ varies continuously with $q$.
\end{enumerate}
It then  follows from classical arguments in the theory of covering spaces and originating from W. Ambrose \cite[Theorem A]{Ambrose} that $\Phi$ is a covering map, see also \cite{Jaramillo} for further details. Since $\M / \mathcal{F}$ is simply connected, it follows that $\Phi$ is an isometry.
\end{proof}

We are now ready for the proof of Theorem \ref{empty cut}

\

\noindent
\textit{Proof of Theorem \ref{empty cut}.} It follows from Lemmas \ref{global iso} and \ref{complete proof} that for every $u \in \mathcal{H}_x$, $d_{\mathcal{H}_x}(0,u)=d_{\M / \mathcal{F}}( \mathcal{F}_x, \mathcal{F}_{\exp(x,u) } )=|u|$. From \cite[Lemma 4.3]{Hermann}, this is equivalent to the fact that $d( \mathcal{F}_x, \exp(x,u)  )=|u|$. Therefore, one has $\mathbf{Cut}(\mathcal F_{x})=\emptyset$. It remains to prove that $\M$ is diffeomorphic to $T^\perp\mathcal{F}_x$. Since $\mathbf{Cut}(\mathcal F_{x})=\emptyset$, according to Proposition 4.5  and Remark 3.6 in \cite{MR1941909}, for every $z$, there exists a unique horizontal, unit-speed, and distance minimizing geodesic connecting $z$ to $\mathcal{F}_x$. Thus, the normal exponential map $T^\perp\mathcal{F}_x \to \M$ is a diffeomorphism. \qed

\section{Laplacian comparison theorems and bottom of the spectrum}

From now on, in addition to Assumptions \ref{assump total} and \ref{negative curv} we make the following:

\begin{assumption}
All the  leaves are minimal submanifolds of $\M$.
\end{assumption}

\subsection{Comparison results}

We  denote by $\Delta$ the Laplace-Beltrami operator of $\M$ and by $\Delta_\mathcal{H}$  the horizontal Laplacian of the foliation. We refer to \cite{BaudoinEMS2014} for the definition of $\Delta_\mathcal{H}$.

Let $X_1,\cdots,X_n$ be an horizontal orthonormal frame and  $Z_1,\cdots,Z_m$ be a vertical orthonormal frame around a point $x \in \M$. We note that, by definition, the mean curvature vector of a leaf  is the horizontal part of $\sum_{j=1}^m D_{Z_i}Z_i$  where, as before, $D$ denotes the  Levi-Civita connection.  The assumption that all the leaves are minimal submanifolds of $\M$ is therefore equivalent to the fact that the vector $\sum_{j=1}^m D_{Z_i}Z_i$ is always vertical. From the definition of $\nabla$, this is also equivalent to
\[
\sum_{j=1}^m D_{Z_i}Z_i=\sum_{j=1}^m \nabla_{Z_i}Z_i.
\]
Also note that from the bundle-like condition \eqref{bundle-like}, $D_{X_i}X_i$ is horizontal so that $D_{X_i}X_i=\nabla_{X_i}X_i $. 
Denoting now by $\mathrm{Hess}^{D}$ the Hessian for the Levi-Civita connection $D$, one has  for any smooth function $f:\M \to \mathbb R$
\begin{align*}
\Delta f(x) &=\sum_{i=1}^n \mathrm{Hess}^{D} f (X_i,X_i) +\sum_{j=1}^m \mathrm{Hess}^{D} f (Z_i,Z_i) \\
 &=\sum_{i=1}^n (X_i^2 f -D_{X_i}X_i f) +\sum_{j=1}^m (Z_i^2 f -D_{Z_i}Z_i f)\\
 &=\sum_{i=1}^n (X_i^2 f -\nabla_{X_i}X_i f) +\sum_{j=1}^m (Z_i^2 f -\nabla_{Z_i}Z_i f) \\
 &=\sum_{i=1}^n \mathrm{Hess}^{\nabla} f (X_i,X_i) +\sum_{j=1}^m \mathrm{Hess}^{\nabla} f (Z_i,Z_i)
\end{align*}
and
\[
\Delta_\mathcal{H} f(x) =\sum_{i=1}^n \mathrm{Hess}^{\nabla} f (X_i,X_i).
\]

We can now state the Laplacian comparison theorem.

\begin{lemma}\label{comparison laplace}
Let $x \in \M$ and consider the distance function 
\[
r_x(y): =d(y,\mathcal F_{x}), \, y \in \M.
\]
Then $r_x$ is smooth on $\M \setminus \mathcal F_{x}$ and for every $y \in \M \setminus \mathcal F_{x}$,
\begin{align*}
\Delta_{\mathcal{H}} r_x(y)=\Delta r_x(y)
 \ge   \begin{cases} 
\frac{n-1}{r_x(y)} & \text{if $K = 0$,}\\ (n-1)\sqrt{K} \coth (\sqrt{K} r_x(y) )& \text{if $K > 0$.} \end{cases}
\end{align*}
\end{lemma}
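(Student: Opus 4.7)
The plan is to use the Hessian formula of Lemma~\ref{Hessian comparison} together with a classical one-dimensional Sturm comparison, leveraging the minimality of the leaves to reduce the full Laplacian to its horizontal part. The argument breaks into three ingredients: smoothness, the identity $\Delta r_x = \Delta_\mathcal H r_x$, and the lower bound.

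For the first two, smoothness of $r_x$ on $\M \setminus \mathcal F_x$ is immediate from Theorem~\ref{empty cut} combined with item~(ii) in the regularity list of Section~2.4. For $\Delta r_x = \Delta_\mathcal H r_x$, I would use the $\nabla$-decomposition of $\Delta$ displayed just before the statement (which is where minimality of the leaves enters) together with Lemma~\ref{Hessian comparison}(iii) to kill the vertical Hessian terms.

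For the lower bound, I would fix $y \in \M \setminus \mathcal F_x$, set $\rho = r_x(y)$, let $\gamma:[0,\rho]\to \M$ be the unique unit-speed minimizing horizontal geodesic joining $\mathcal F_x$ to $y$ (which exists by Theorem~\ref{empty cut}), and complete $\dot\gamma(\rho)$ to an orthonormal basis $X_1 = \dot\gamma(\rho), X_2, \ldots, X_n$ of $\mathcal H_y$. By Lemma~\ref{Hessian comparison}(i) the $X_1$ term vanishes, and for each $i \geq 2$, combining part~(ii) with the sectional curvature bound from Assumption~\ref{negative curv} gives
\[
\mathrm{Hess}^\nabla r_x(X_i,X_i) \;\geq\; \int_0^\rho \bigl(|\nabla_{\dot\gamma} V_{i,\mathcal H}|^2 + K|V_{i,\mathcal H}|^2\bigr)\,dt,
\]
where $V_i$ is the $\mathcal F$-Jacobi field along $\gamma$ with $V_{i,\mathcal H}(0)=0$ and $V_i(\rho) = X_i$. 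Setting $\phi_i(t) := |V_{i,\mathcal H}(t)|$, the absence of focal points (Lemma~\ref{no conjugate}) together with the convexity argument in its proof gives $\phi_i > 0$ on $(0,\rho]$, $\phi_i \in C^1([0,\rho])$, with $\phi_i(0) = 0$, $\phi_i(\rho) = 1$. Cauchy--Schwarz yields $\phi_i'^2 \leq |\nabla_{\dot\gamma}V_{i,\mathcal H}|^2$, so the right-hand side dominates $\int_0^\rho (\phi_i'^2 + K\phi_i^2)\,dt$. A routine calculus-of-variations minimization over functions with those boundary values produces the model-space values $\sqrt K \coth(\sqrt K \rho)$ for $K>0$ and $1/\rho$ for $K=0$. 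Summing over the $n-1$ perpendicular directions gives the claimed bound.

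I do not anticipate a serious obstacle at this stage: the deep work has already been carried out in Lemma~\ref{Hessian comparison} (the $\nabla$-Hessian formula), in Theorem~\ref{empty cut} (vacuity of the cut locus, which supplies both the smoothness of $r_x$ and the globally defined minimizing horizontal geodesic), and in the $\nabla$-decomposition of $\Delta$ made available by minimality of the leaves. What remains is a classical Sturm-type comparison; the only mild technical care needed concerns the $C^1$-regularity of $\phi_i$ at $t=0$, which is handled by the convexity computation underlying Lemma~\ref{no conjugate}.
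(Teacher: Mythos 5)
Your proposal is correct and reaches the same conclusion as the paper, but the final estimate is carried out by a genuinely different (though equivalent) route. The paper introduces
\[
\kappa_i(t)=\frac{\left\langle V_{i,\mathcal{H}}(t),\nabla_{\dot\gamma}V_{i,\mathcal{H}}(t)\right\rangle}{|V_{i,\mathcal{H}}(t)|^2},
\]
shows, using the Jacobi equation, the curvature bound \eqref{sectional} and Cauchy--Schwarz, that $\dot\kappa_i\ge K-\kappa_i^2$, and then invokes a one-dimensional Riccati comparison together with the initial asymptotics $\kappa_i(t)\sim 1/t$ to get $\kappa_i(\rho)\ge\sqrt{K}\coth(\sqrt{K}\rho)$ (resp.\ $1/\rho$ when $K=0$), which is exactly $\left\langle V_{i,\mathcal{H}}(\rho),\nabla_{\dot\gamma}V_{i,\mathcal{H}}(\rho)\right\rangle$ since $|V_{i,\mathcal{H}}(\rho)|=1$. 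You instead stay with the integral (index-form) version of the Hessian formula, insert the curvature bound under the integral, reduce to a scalar functional $\int_0^\rho(\phi_i'^2+K\phi_i^2)\,dt$ via the Kato-type inequality $\phi_i'^2\le|\nabla_{\dot\gamma}V_{i,\mathcal{H}}|^2$ for $\phi_i=|V_{i,\mathcal{H}}|$, and minimize over the admissible class. Both arguments are classical and equivalent (note that $\kappa_i=\phi_i'/\phi_i$, so the paper's Riccati quantity is the logarithmic derivative of your $\phi_i$); yours is the Heintze--Karcher/index-form style, the paper's the Riccati-ODE style. Your approach has the mild advantage of avoiding the singular-initial-condition Riccati comparison, while the paper's fits the standard Laplacian-comparison template. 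The only small point worth making explicit in your write-up is that you also need $V_{i,\mathcal{H}}(t)\perp\dot\gamma(t)$ for all $t$ (so that the sectional-curvature bound applies pointwise); this follows from the Jacobi equation, metricity of $\nabla$, and the boundary conditions $V_{i,\mathcal{H}}(0)=0$, $V_i(\rho)=X_i\perp\dot\gamma(\rho)$, exactly as remarked inside the proof of Lemma~\ref{Hessian comparison}. You are also right that Lemma~\ref{no conjugate} together with its convexity argument gives $\phi_i>0$ on $(0,\rho]$; strictly speaking that lemma excludes zeros of $V_i$, not of $V_{i,\mathcal{H}}$, but the convexity of $|V_{i,\mathcal{H}}|^2$ and ODE uniqueness handle the stronger statement, as you note.
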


\begin{proof}
From Theorem \ref{empty cut}, the cut-locus of $\mathcal F_{x}$ is empty which implies that for every $y \in  \M \setminus \mathcal F_{x}$ there is a unique unit-speed horizontal, and minimizing geodesic $\gamma:[0,\rho] \to \M$ connecting $\mathcal F_{x}$ to $y$. It also implies that  $r_x$ is smooth at $y$. Consider an orthonormal frame of $\mathcal{H}_y$  given by $\left\{  \gamma'(\rho), X_1,\cdots,X_{n-1} \right\}$ and an orthonormal frame of $\mathcal{V}_y$ given by $\left\{  Y_1,\cdots,Y_{m} \right\}$. Applying Lemma \ref{Hessian comparison} and the computations just before Lemma \ref{comparison laplace} yields
\begin{align*}
\Delta_{\mathcal{H}} r_x(y)=\Delta r_x(y)&=\mathrm{Hess}^{\nabla}(r)(\gamma'(\rho),\gamma'(\rho))+\sum_{i=1}^{n-1} \mathrm{Hess}^{\nabla}(r)(X_i,X_i) \\
 &=\sum_{i=1}^{n-1} \left\langle V_{i,\mathcal{H}} (\rho),  \nabla_{\dot\gamma} V_{i,\mathcal{H}} (\rho) \right\rangle,
\end{align*}
where $V_i$ is the   $\mathcal{F}$-Jacobi field along $\gamma$ such that $V_{i,\mathcal{H}}(0) = 0$ and $V_i(\rho) = X_i$. The next  step in the proof is then a variation of  classical techniques  (see for instance the proof of the Rauch comparison theorem \cite[Theorem 6.4.3]{Petersen}). Indeed, consider the ratio
\[
\kappa_i(t)=\frac{\left\langle V_{i,\mathcal{H}} (t),  \nabla_{\dot\gamma} V_{i,\mathcal{H}} (t) \right\rangle}{| V_{i,\mathcal{H}} (t)|^2}.
\]
We have  
\begin{align*}
\dot\kappa_i(t)&=\frac{ | \nabla_{\dot\gamma} V_{i,\mathcal{H}} (t) |^2| V_{i,\mathcal{H}} (t)|^2 + \left\langle V_{i,\mathcal{H}} (t), \nabla_{\dot\gamma}  \nabla_{\dot\gamma} V_{i,\mathcal{H}} (t) \right\rangle | V_{i,\mathcal{H}} (t)|^2-2\left\langle V_{i,\mathcal{H}} (t),  \nabla_{\dot\gamma} V_{i,\mathcal{H}} (t) \right\rangle^2}{| V_{i,\mathcal{H}} (t)|^4} \\
 &=\frac{ | \nabla_{\dot\gamma} V_{i,\mathcal{H}} (t) |^2| V_{i,\mathcal{H}} (t)|^2 - \left\langle V_{i,\mathcal{H}} (t), R(V_{i,\mathcal{H}}(t),\dot\gamma)\dot\gamma \right\rangle | V_{i,\mathcal{H}} (t)|^2-2\left\langle V_{i,\mathcal{H}} (t),  \nabla_{\dot\gamma} V_{i,\mathcal{H}} (t) \right\rangle^2}{| V_{i,\mathcal{H}} (t)|^4} \\
 & \ge \frac{ | \nabla_{\dot\gamma} V_{i,\mathcal{H}} (t) |^2| V_{i,\mathcal{H}} (t)|^2 +K  | V_{i,\mathcal{H}} (t)|^4-2\left\langle V_{i,\mathcal{H}} (t),  \nabla_{\dot\gamma} V_{i,\mathcal{H}} (t) \right\rangle^2}{| V_{i,\mathcal{H}} (t)|^4} \\
 &\ge K-\kappa_i(t)^2.
\end{align*}
We then conclude by standard comparison theory of  Riccati type differential equations that
\[
\kappa_i(t)\ge   \begin{cases} 
\frac{1}{t} & \text{if $K = 0$,}\\ \sqrt{K} \coth (\sqrt{K} t) & \text{if $K > 0$.} \end{cases}
\]
This yields:
\[
\left\langle V_{i,\mathcal{H}} (\rho),  \nabla_{\dot\gamma} V_{i,\mathcal{H}} (\rho) \right\rangle=\kappa_i(\rho)\ge   \begin{cases} 
 \frac{1}{\rho} & \text{if $K = 0$,}\\ \sqrt{K} \coth ( \sqrt{K} \rho ) & \text{if $K > 0$,} \end{cases}
\]
which concludes the proof.
\end{proof}

\subsection{Bottom of the spectrum}

We now conclude the paper with the proof of the estimate in Theorem \ref{main theorem} for the bottom of spectrum for both the Laplacian and horizontal Laplacian of the foliation. This follows from the following inequality where we denote by $C^\infty_c(\M)$ the space of smooth and compactly supported functions on $\M$ and by $m$ the Riemannian volume measure.

\begin{theorem}\label{poincare inequality}
For every $f \in C^\infty_c(\M)$,
\[
\frac{(n-1)^2 K}{4} \int_\M f^2 dm \le  \int_\M | \nabla_\Ho f | ^2 dm \le  \int_\M | \nabla f | ^2 dm.
\]
\end{theorem}

\begin{proof}
The inequality $ \int_\M | \nabla_\Ho f | ^2 dm \le  \int_\M | \nabla f | ^2 dm$ is straightforward since $\nabla_\Ho f $ is the orthogonal projection of $\nabla f $ onto $\Ho$. Then, we can assume $K > 0$ otherwise there is nothing to prove. Let $f \in C^\infty_c(\M)$ which is not zero. From Lemma \ref{complete proof} (iii) and Lemma \ref{global iso},  the leaf space $\M / \mathcal{F}$  is not compact and therefore we can find $x \in \M$ such that the leaf $\mathcal F_x$ does not intersect the support of $f$. As before, we denote by $r_x$ the distance to $\mathcal F_x$. From Lemma \ref{comparison laplace} and an integration by parts formula justified by the fact that  $r_x$ is smooth on the support of $f$ we have:

\begin{align*}
(n-1) \sqrt{K} \int_\M f^2 dm \le \int_\M f^2 (\Delta_\Ho r_x )dm=-\int_\M \langle  \nabla_\Ho f^2, \nabla_\Ho r_x \rangle dm.
\end{align*}

Now, from Cauchy-Schwarz inequality and the fact that $| \nabla_\Ho r_x|=1 $, we have
\[
-\int_\M \langle  \nabla_\Ho f^2, \nabla_\Ho r_x \rangle dm \le \int_\M |  \nabla_\Ho f^2 | dm \le  2 \left( \int_\M f^2 dm \right)^{1/2} \left( \int_\M |  \nabla_\Ho f |^2 dm \right)^{1/2}.
\]
We conclude 
\begin{align*}
(n-1) \sqrt{K} \int_\M f^2 dm \le  2 \left( \int_\M f^2 dm \right)^{1/2} \left( \int_\M |  \nabla_\Ho f |^2 dm \right)^{1/2},
\end{align*}
which yields the expected result.
\end{proof}
\bibliographystyle{plain}
\bibliography{biblio}

\end{document}